\documentclass{amsart} 
\usepackage{latexsym,amssymb,graphicx,amscd,amsmath}
\pdfoutput=1       
\newtheorem{thm}{Theorem}[section]
\newtheorem{lem}[thm]{Lemma}
\newtheorem{prop}[thm]{Proposition}
\newtheorem{cor}[thm]{Corollary} 
\newtheorem{de}[thm]{Definition} 
\newtheorem{rem}[thm]{Remark}

\newcommand{\BS}{{\mathcal{S}_p}}
\newcommand{\BSplus}{{\mathcal{S}^+_p}}

\newcommand{\BZ}{{\mathbb{Z}}}
\newcommand{\BQ}{{\mathbb{Q}}}
\newcommand{\BO}{{\mathcal{O}_p}}
\newcommand{\BOplus}{{\mathcal{O}_p^+}}

\newcommand{\BB}{{\mathcal{B}}}

\newcommand{\BG}{{\mathcal{G}}}

\newcommand{\BJ}{{\mathcal{J}}}

\newcommand{\BF}{{\mathbb{F}}}

\newcommand{\BSs}{{{\mathcal{S}}_p^{\sharp}}}
\newcommand{\BSps}{{{\mathcal{S}}_p^{+\sharp}}}

\newcommand{\Si}{{\Sigma}}

\newcommand{\bb}{{\mathfrak{b}}}

\newcommand{\bg}{{\mathfrak{g}}}

\newcommand{\la}{{\lambda}}

\begin{document}
\title{Integral TQFT for a one-holed torus}
 
\author{ Patrick M. Gilmer}
\address{Department of Mathematics\\
Louisiana State University\\
Baton Rouge, LA 70803\\
USA}
\email{gilmer@math.lsu.edu}
\thanks{The first author was partially supported by  NSF-DMS-0604580, NSF-DMS-0905736 }
\urladdr{www.math.lsu.edu/\textasciitilde gilmer/}

\author{Gregor Masbaum}
\address{Institut de Math{\'e}matiques de Jussieu (UMR 7586 du CNRS)\\
Universit{\'e} Paris 7 (Denis Diderot) \\
Case 7012 - Site Chevaleret\\
75205 Paris Cedex 13\\
FRANCE }
\email{masbaum@math.jussieu.fr}
\urladdr{www.math.jussieu.fr/\textasciitilde masbaum/}

\date{This version: February 28, 2011. First version: August 19, 2009}

\begin{abstract} We give 
new 
explicit
formulas for the representations of
  the mapping class group of a 
genus one surface with one boundary component
which arise from 
Integral TQFT. 
Our formulas allow one to compute 
the $h$-adic expansion of the
TQFT-matrix associated to a
mapping class in a straightforward way. 
Truncating 
the $h$-adic
expansion gives an approximation of the representation
by representations into finite groups. As a special case, we study the induced representations over finite
fields 
and identify them up to isomorphism. 
 The key technical ingredient of the paper are new bases of the
Integral TQFT modules which are orthogonal with respect to 
the
Hopf pairing. We construct these orthogonal bases in arbitrary genus, and briefly describe some other applications of them.
\end{abstract}

\maketitle
\tableofcontents

\section{Introduction} \label{sec.intro}

Consider the integral TQFT studied in \cite{G, GM}.  Associated to a
compact oriented surface $\Si$   and an odd prime $p$,
there is a free lattice ({\em i.e.,} a free finitely generated module) $\BSplus(\Si)$ over the
ring of cyclotomic integers $\BOplus=\BZ[\zeta_p]$ formed by adjoining
to $\BZ$ a primitive $p$-th root of unity $\zeta_p$. Tensoring
$\BSplus(\Si)$ with the cyclotomic field $\BQ(\zeta_p)$ yields a
version of the Reshetikhin-Turaev TQFT
associated with the Lie group $SO(3)$, and we think of $\BSplus$ as an
integral refinement of that theory (see \cite{GM} for more details). 

There is a natural representation 
$\rho_p$ 
of  
a certain central extension of $\Gamma_{\Si} $,
the mapping class group of  $\Si$, on the lattice $\BSplus(\Si)$.
Let $h$ denote $1-\zeta_p$; this is a prime in $\BOplus$. For every $N\geq 0$,
the representation
$\rho_p$ induces a representation $\rho_{p,N}$ 
on $${\mathcal{S}}^+_{p,N}(\Si)= \BSplus(\Si)\slash h^{N+1}
\BSplus(\Si)~,$$ which is a free module over the 
 quotient
 ring $\BOplus
\slash h^{N+1} \BOplus$. Note that for $N=0$ this ring is the finite
field $\BF_p$, so that $\rho_{p,0}$ is a representation on a
finite-dimensional $\BF_p$-vector space. The representation
$\rho_{p,0}$ factors through the mapping class group $\Gamma_{\Si}$ \cite{GMnew},
but this is not the case for $\rho_{p,N}$ when $N>0$.

Loosely speaking, we refer to the sequence of representations
$\rho_{p,N}$ as the $h$-adic expansion\footnote{Here, we mean the $h$-adic expansion at a fixed
  $p$-th root of unity, not to be confused with an Ohsuki-style expansion
  when $p$ goes to infinity (hence $q=\zeta_p$ goes to $1$) which  is discussed in \cite{Minprogress}.}   of the TQFT representation
$\rho_p$. 
Note that each $\rho_{p,N}$  
takes values in 
a finite group, 
since ${\mathcal S}^+_{p,N}(\Si)$ is
of finite rank over $\BOplus
\slash h^{N+1} \BOplus$, which itself is finite.  
Thus the $h$-adic expansion approximates the TQFT-representation
$\rho_p$ by representations into finite groups.  
Moreover, the natural
map $$\BOplus = \BZ[\zeta_p] \, \longrightarrow \,\,\underleftarrow{\lim} \,\, \BOplus
\slash h^{N+1} \BOplus \, \cong \BZ_p[\zeta_p]$$ is injective. (Here
$\BZ_p$ denotes the $p$-adic integers.)  
  The last isomorphism comes from
the fact that $(h^{p-1})=(p)$ as ideals in $\BZ[\zeta_p]$.  
Thus the $h$-adic expansion of the TQFT-representation
$\rho_p$ is complete in the sense that 
if a mapping class is detected by $\rho_p$ then it is also 
detected by $\rho_{p,N}$ for all
big enough $N$.

The image of $\rho_p$ is usually infinite \cite{F, Minfin}.  In this
case,   the 
 images of $\rho_{p,N}$ are finite groups whose size must increase as
$N\rightarrow \infty$.

We believe these representations into finite groups deserve further study. Unfortunately,
the known formulas for $\rho_p$ are not well suited to
computing the $h$-adic
expansion explicitly in practice, and new techniques are needed. 
In this paper we complete the first
step in this direction. 
That is: we  compute explicit matrices for
the $h$-adic expansion in the case when $\Si=\Si_{1,1}$ is a genus
one surface with one boundary component. Our formulas give partial information
in higher genus as well, but as we don't yet have a complete
answer, we won't discuss formulas for the $h$-adic expansion in the higher genus case in this paper.

We refer to  $\Si_{1,1}$ as a
one-holed torus.  As is customary in the
TQFT world, when dealing with surfaces with boundary, in practice one
caps off the boundary circles  by  disks containing
one colored banded point each in their interiors. (A banded point is a
small oriented interval.) When doing this for $\Si_{1,1}$, 
we denote the resulting closed torus by ${\mathcal T}_c$,
where $2c$ is the color of the banded point;  here $2c$ can be any
even integer satisfying $0\leq 2c\leq p-3$. Mapping classes  have to preserve the banded
point, so that the Dehn twist $t_\delta$ about a simple closed curve
$\delta$ encircling the colored banded point is non-trivial. Thus we
are indeed dealing with $\Gamma_{1,1}$, the mapping class group of the
one-holed torus. This group has
presentation
\begin{equation} \label{presGamma} \Gamma_{1,1}=<t_\alpha, t_\beta \, | \, t_\alpha t_\beta t_\alpha =
t_\beta t_\alpha t_\beta >
\end{equation}
 where $t_\alpha$ and $t_\beta$ are Dehn twists $t_\alpha$ and $t_\beta$ about essential simple
closed curves $\alpha$ and $\beta$ which avoid the colored banded
point and intersect exactly once. One
has  \begin{equation} \label{tdelta} t_\delta = ( t_\alpha t_\beta)^6~. 
\end{equation}

 We denote by  $t$ and $t^*$ the endomorphisms of
$\BSplus({\mathcal T}_c)$ given as $\rho_p$ of certain lifts  
of $t_\alpha$ and $t_\beta$ to the extended
mapping class group 
\cite{GMnew}. The
notation $t^*$ is motivated by the fact that $t$ and $t^*$ are adjoint
with respect to a symmetric bilinear form  $((\ ,\,))$ called the Hopf
pairing which will play a crucial role in this paper. To describe the
TQFT representation $\rho_p$ in our situation, it suffices to write
down matrices for $t$ and $t^*$.  

In the TQFT defined over the cyclotomic {\em field} $\BQ(\zeta_p)$, it
is well-known how to do this, because the endomorphism $t$ can be
diagonalized over $\BQ(\zeta_p)$ using a certain basis of 
`colors',
and the entries of $t t^* t$ in this basis can 
then be computed from an
appropriate (generalized) $S$-matrix. However this basis in which $t$
is diagonal is {\em not}
an  $\BOplus$-basis for the lattice $\BSplus({\mathcal T}_c)$. For example,  the
coefficients of the 
matrix of $t^*$ 
in this basis do not lie in
$\BOplus$. Hence we cannot compute the $h$-adic expansion of the
representation $\rho_p$  using this
basis. 

On the other hand, for an arbitrary surface $\Si$ equipped with
colored banded points, a $\BOplus$-basis for the lattice $\BSplus(\Si)$
was constructed explicitly in our paper \cite{GM}. In this {\em
  lollipop 
basis}, all 
mapping classes are represented by matrices with coefficients in
$\BOplus$. However, the explicit formulas we get from \cite{GM} are
still not good enough to compute the $h$-adic expansion in a nice way. 

In the present
paper, we will describe a 
modification of the construction 
of \cite{GM}. The crucial additional property of the bases obtained here
is that they are orthogonal with respect to a Hopf-like pairing $((\
,\,))$ which generalizes the one mentioned above in the genus one
case.   
As we'll see, this orthogonality property is very useful as one can
easily express general vectors in terms of the orthogonal basis, using
the pairing.  
We refer to this new basis as the {\em orthogonal lollipop
  basis.} We perform its construction in the general situation of  a surface $\Si$ equipped with
colored banded points, as the higher genus case presents no extra
difficulty, 
and give two immediate applications (see Remarks \ref{appl1} and \ref{appl2}).  
Moreover, \cite{G3} and work in progress \cite{Minprogress,GM3} make use of
this basis in 
higher genus as well.

Let us end this introduction with a brief description of how the TQFT-represen\-tation $\rho_p$
looks like in the orthogonal lollipop basis for the one-holed torus, and discuss an application we find interesting. The lattice $\BSplus({\mathcal T}_c)$ has rank $d-c$, where
$d=(p-1)/2$. The basis we'll construct is denoted by 
$$Q_n^{\prime (c)}\ (0\leq  n\leq d-c-1)~.$$ One of its main features
is that $t$ is upper triangular and $t^*$ is lower triangular in
this basis, as will be shown in Section~\ref{ptorus}. This fact is also the main reason why the new basis is better than the
one described in \cite{GM}, where only
one of $t$ and $t^*$ would be given by triangular matrix.

The matrix coefficients of
$t$ and $t^*$ in this basis will be denoted by $a^{(c)}_{m,n}$ and
$b^{(c)}_{m,n}$, respectively, and we will give explicit formulas for
them in Section~\ref{ptorus}. These formulas can be used to compute
the $h$-adic 
expansion of $a^{(c)}_{m,n}$ and
$b^{(c)}_{m,n}$; in particular, they make it evident that $a^{(c)}_{m,n}$ and
$b^{(c)}_{m,n}$ lie in $\BZ[\zeta_p]$. 

The fact that $t$ is upper triangular and $t^*$ is lower triangular
is reminiscent of the action of  $SL(2,\BZ)$ on homogeneous
polynomials in two variables. Motivated by this observation, we will show in
Section~\ref{sec5} that
the representation $\rho_{p,0}$ on the $\BF_p$-vector space
${\mathcal S}^+_{p,0}({\mathcal T}_c)$ factors through $SL(2, \BF_p)$ and is, in
fact, isomorphic to the representation of  $SL(2, \BF_p)$
on the space of  homogenous polynomials over $\BF_p$ in two variables of the
appropriate degree $D=d-c-1$. 

In order to put this last result into perspective, we remark that when
$c=0$, it is known  \cite{LW,FK,G2} that the representation
$\rho_p$ itself factors through (a finite central extension of) $SL(2,
\BF_p)$, as the situation then reduces to the classical case of the
closed 
torus. In contrast, when $c>0$, there is no
reason to believe that the representation should factor through  $SL(2,
\BF_p)$. Indeed, we 
conjecture 
that the representation $\rho_p$ 
does {\em not} factor through any finite group, although, of
course, each $\rho_{p,N}$ does,
as was explained above. 
It would follow that
the
quotient group of the extended mapping class group which acts effectively under the representation
$\rho_{p,N}$ must get bigger and bigger as $N$ increases, and one of
our motivations for this paper was to develop techniques to compute
the representations $\rho_{p,N}$ explicitly and efficiently. 

The contrast between the cases $c=0$ and $c>0$ are, of course,
expected to be related to geometry: a closed torus is Euclidean,
whereas a torus with non-empty boundary is hyperbolic. 
We remark that 
it is shown in unpublished work of the second author \cite{M2}
that 
for $c=d-2$, any pseudo-Anosov mapping class $\varphi$ 
in 
$\Gamma_{1,1}$ is represented by a matrix of infinite order under
$\rho_p$ if $p$ is big enough.
 (See \cite{AMU} for a proof of the analogous result for 
the mapping class group of a four-holed sphere.)  
In this situation, it follows that the order of $\rho_{p,N}(\varphi)$ must go to infinity as $N$
increases, showing in particular that the above-mentioned conjectural picture is correct at least in
this case.

\section{The orthogonal lollipop basis for a one-holed torus}\label{sec2}
 Let $p\geq 5$ be a prime, and $\zeta_p$ be a primitive  $p$-th root
 of unity. We denote the cyclotomic ring $\BZ[\zeta_p]$ by
 $\BOplus$. We also let $\BO$ denote $\BOplus$ if $p \equiv 3
 \pmod{4}$ and 
 $\BOplus[i]$ if $p \equiv 1 \pmod{4}$. Throughout the
 paper, we use the notation $h=1-\zeta_p$ and $d=(p-1)/2$. When
 referring to quantum integers or to skein-theoretical constructions, we will often write $q$
 for $\zeta_p$, and we put $A= - q^{d+1}$; this is a primitive $2p$-th
 root of unity, and a square root of $q=\zeta_p$. 

Here is some more notation that we will need. For $0 \le n < p$, let
$\{n\}=(-A)^n-(-A)^{-n}$,
 $\{n\}^+ = (-A)^n+(-A)^{-n}$. 
 (Warning: the minus signs make this different notation than commonly used.)
 We also  let $\{n\}_q=q^n-q^{-n}$.
As usual, we put $\{n\}! = \{1\}\{2\}\cdots\{n\}$, and  $\{0\}!=1$. If $n$ is negative, let $\{n\}!=0$. Also let
$\{n\}!! = \{n\} \{n-2\} \cdots \text{ending in $\{1\}$ or $\{2\}$}$.  
Interpret $\{n\}^+!$,  $\{n\}^+!!$, $\{n\}_q!$ similarly.

We refer the
 reader to section 2 of \cite{GM} for the definitions of the TQFT-module
 $V_p(\Si)$, defined over 
 $\BO[h^{-1}]$,  and the integral TQFT-module
 $\BS(\Si)$, defined over $\BO$. 
   In this paper,
 we are mainly interested in the refined integral TQFT-module $\BSplus(\Si)$,
 with coefficients in $\BOplus$; it is defined in \cite[Section
 13]{GM}.  Here $\Si$ is a
 compact oriented surface equipped with a (possibly empty) set of
 colored banded points. The set of
 allowed colors is $\{0,1, \cdots, p-2 \}$, but the sum of the colors
 of all the banded points must be even.  The modules  $V_p(\Si)$ and
 $\BS(\Si)$ are obtained from  $\BSplus(\Si)$ by tensoring with 
 $\BO[h^{-1}]$ 
 and $\BO$, respectively;  thus, all three modules are free of the same finite rank 
 over their respective coefficient rings, and  we have natural
 inclusions $$\BSplus(\Si) \subset \BS(\Si) \subset V_p(\Si)$$ coming
 from the inclusions $\BOplus \subset \BO\subset  
 \BO[h^{-1}]$. The $V_p$-theory is a version of the Reshetikhin-Turaev
 $SO(3)$-TQFT,  
the only difference being that the latter is usually considered with
coefficients in the quotient field of  
$\BO$
(or even with coefficients in $\mathbb C$.)

Let  ${\mathcal T}_c$ denote a torus with one banded point colored
$2c$. If $c=0$, we can forget the banded point, and we simply write
${\mathcal T}$ for ${\mathcal T}_0$.  Elements of $V_p({\mathcal
  T}_c)$ are represented skein-theoretically as linear combinations of
colored banded graphs in a solid torus, which meet the boundary nicely
in the colored banded point. See \cite{BHMV2} for the
skein-theoretical construction of TQFT we are using here. 

The {\em small graph basis}
\cite[Prop. 3.2]{GM} of  $V_p({\mathcal
  T}_c)$ is $L_{c,n}\  (0\leq n\leq d-c-1)~,$ where
\begin{equation}\label{Lbasis}L_{c,n}\ = \  \hskip.1in
 \begin{minipage}{0.4in}\includegraphics[width=0.4in]{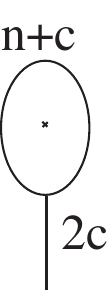}\end{minipage}
\end{equation} 
is a lollipop graph with stick color $2c$, and color $c+n$
on the loop edge. 
The {\em Hopf pairing} is the 
 non-degenerate
 bilinear form  $(( \ ,\, ))$ on $V_p({\mathcal
  T}_c)$ defined as follows: \[((L_{c,n},L_{c,m}))= \left<
  \ \ \begin{minipage}{.8in}\includegraphics[width=.8in]
  {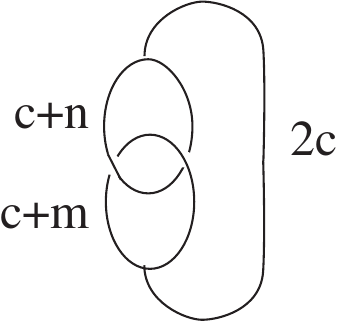}
 \end{minipage}\
\ \ \right>, \]
 where $\langle \  \rangle$ denotes the 
evaluation of a colored banded graph in $S^3$, see \cite{MV,KL}.  
(There is a choice of the type of clasp. We insist that
the clasp be as drawn. 
This is just a convention;  our formulas would differ slightly if we chose the other convention.) The Hopf pairing restricts to a symmetric bilinear  form
$$(( \ ,\, ))  : \BSplus(\mathcal{T}_c) \times  \BSplus(\mathcal{T}_c) \rightarrow \BOplus~.$$

If $c=0$, the Hopf pairing is the same as the pairing $\langle\ ,\
\rangle$ studied in \cite{BHMV1}. (In this case, the choice of clasp
does not matter.) As shown there, an orthogonal basis  for
$V_p(\mathcal{T})$ with respect to this pairing is $\{Q_0, Q_1,
\cdots Q_{d-1}\}~,$ where  $Q_0=1$, and $$Q_n=
\prod_{i=0}^{n-1}(z-\lambda_i)~.$$ Here $\lambda_i= -q^{i+1}-q^{-i-1}$,
and $z$ denotes the core of
the solid torus  with color $1$.  (In our current notation, $z$ is
$L_{0,1}$ and its framing is as drawn, {\em i.e.,} the blackboard framing.) The product
uses the commutative algebra structure on $V_p(\mathcal{T})$ where
$z^n$ means $n$ parallel copies of $z$. 

We now generalize this to the case when $c\neq 0$. Observe that
$V_p(\mathcal{T}_c)$ is a module over $V_p(\mathcal{T})$.
 Since $V_p(\mathcal{T})$ is commutative, we may
view this both as a left and as a right module structure, whatever is
notationally more
convenient. For example, one has  
 \[ L_{c,n} z \ =\ 
 zL_{c,n}\ =  \ \
 \ \begin{minipage}{0.8in}\includegraphics[width=0.8in]{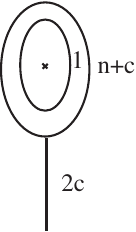}\end{minipage}\
 \ \  \] where the right hand side can then be expressed as a linear
 combination of $L_{c, n+1}$ and $L_{c, n-1}$ in the usual way, see \cite{MV,KL}.

\begin{de} For $0\leq n\leq d-c-1$, we set $$Q_n^{{(c)}}=
  L_{c,0} \prod_{i=c}^{c+n-1}(z-\lambda_i)~,$$ and ${Q}_n^{\prime (c)}= 
(\{n\} !)^{-1}
 Q_n^{(c)}$.
\end{de}

Here is the main result of this section. 

\begin{thm} \label{orthog} $\{
Q_0^{\prime (c)}, Q_1^{\prime (c)}, \cdots Q_{d-c-1}^{\prime (c)} 
\}$ is an orthogonal basis for $\BSplus(\mathcal{T}_c)$ with respect
to the Hopf pairing. Moreover 
\begin{equation}\label{Qwithprime}
(({Q}_n^{\prime (c)},{Q}_n^{\prime (c)}))= q^{ -c(c+1)/ 2} \frac {\{2c+2n+1\} !!\{ 2c+n+1\}^+ !}{\{n\} !}
\frac {(\{c\}_q !)^2} {\{1\}_q \{2c\}_q ! } 
\end{equation}
\end{thm}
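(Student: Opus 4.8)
The plan is to reduce the entire statement to a single \emph{evaluation formula} for the Hopf pairing. Recall that $V_p({\mathcal T}_c)$ is a cyclic module over the commutative ring $V_p({\mathcal T})$, generated by $L_{c,0}$, with $z$ acting by a parallel core colored $1$. The crucial claim, which I would establish first, is that for every polynomial $p$ and every $m$,
\[ (( L_{c,0}\, p(z),\, L_{c,m} )) \;=\; p(\lambda_{c+m})\,(( L_{c,0},\, L_{c,m} ))~. \]
Geometrically this should hold because, in the clasped picture defining the pairing, a core colored $1$ arising on the left factor links the loop of $L_{c,m}$, which is colored $c+m$, and encircling a strand colored $c+m$ by a color-$1$ loop acts as the scalar $\lambda_{c+m}=-q^{c+m+1}-q^{-c-m-1}$; iterating over the factors of $p$ yields the displayed character property. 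This is exactly the generalization of the $c=0$ mechanism used in \cite{BHMV1}, where pairing $p(z)$ against the color $b$ evaluates $p$ at $\lambda_b$, and I regard verifying it skein-theoretically as the one genuinely geometric input of the proof.

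Granting this formula, orthogonality is immediate. By definition $Q_n^{(c)}=L_{c,0}\,p_n(z)$ with $p_n(X)=\prod_{i=c}^{c+n-1}(X-\lambda_i)$, so the evaluation formula gives $((Q_n^{(c)},L_{c,m}))=p_n(\lambda_{c+m})((L_{c,0},L_{c,m}))$, and $p_n(\lambda_{c+m})=\prod_{i=c}^{c+n-1}(\lambda_{c+m}-\lambda_i)$ vanishes precisely for $0\le m\le n-1$ (the $\lambda_i$ being distinct). On the other hand, multiplying $L_{c,0}$ by a polynomial of degree $m$ in $z$ raises the loop color by at most $m$, so $Q_m^{(c)}$ lies in the span of $L_{c,0},\dots,L_{c,m}$. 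Expanding the second slot and invoking the previous vanishing shows $((Q_n^{(c)},Q_m^{(c)}))=0$ whenever $m<n$, and symmetry of the pairing then gives orthogonality for all $m\neq n$.

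For the norm I would extract the diagonal term. Writing $Q_n^{(c)}=\sum_{j=0}^{n}d_j\,L_{c,j}$, the same vanishing collapses the sum to $((Q_n^{(c)},Q_n^{(c)}))=d_n\,((Q_n^{(c)},L_{c,n}))=d_n\,p_n(\lambda_{c+n})\,((L_{c,0},L_{c,n}))$. Each factor is an explicit product of quantum integers: $d_n=\prod_{j=0}^{n-1}\alpha_{c+j}$ is the product of the top ("raising") fusion coefficients in $z\,L_{c,j}=\alpha_{c+j}L_{c,j+1}+\cdots$; the factor $p_n(\lambda_{c+n})=\prod_{i=c}^{c+n-1}(\lambda_{c+n}-\lambda_i)$ is a product of differences of the $\lambda$'s, each of which factors into $\{\,\cdot\,\}$-integers; and $((L_{c,0},L_{c,n}))$ is a Hopf-type evaluation of two clasped lollipops, computable by the standard theta- and tetrahedron-coefficient formulas of \cite{MV,KL}. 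Assembling the three products and dividing by $(\{n\}!)^2$ to pass from $Q_n^{(c)}$ to $Q_n^{\prime(c)}$ should yield \eqref{Qwithprime}; it remains only to confirm that the $Q_n^{\prime(c)}$ span the integral lattice, which I would do by comparing with the lollipop basis of \cite{GM}, noting that after the $(\{n\}!)^{-1}$ normalization absorbs the denominators introduced by the fusion coefficients, $Q_n^{\prime(c)}$ equals $L_{c,n}$ plus an $\BOplus$-combination of lower $L_{c,j}$, so the transition matrix is unitriangular over $\BOplus$ and hence invertible.

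I expect the main obstacle to be neither the orthogonality nor the conceptual structure, but the quantum-integer bookkeeping in the norm: one must check that the product of raising coefficients, the product of $\lambda$-differences, and the clasped-lollipop evaluation recombine into exactly $\{2c+2n+1\}!!\,\{2c+n+1\}^+!/\{n\}!$ times $(\{c\}_q!)^2/(\{1\}_q\{2c\}_q!)$, while correctly tracking the framing and twist contributions responsible for the prefactor $q^{-c(c+1)/2}$. Keeping the three families of factorials ($\{\,\cdot\,\}!!$, $\{\,\cdot\,\}^+!$, and the $\{\,\cdot\,\}_q$-integers) aligned through the telescoping is where I anticipate the real work to lie.
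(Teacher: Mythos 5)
Your treatment of orthogonality and of the norm follows essentially the same route as the paper. The evaluation property $((L_{c,0}\,p(z),L_{c,m}))=p(\lambda_{c+m})\,((L_{c,0},L_{c,m}))$ is exactly the mechanism the paper uses (a factor $z-\lambda_{c+m}$ in one slot ``annihilates'' the $L_{c,m}$ term in the other), the collapse of the norm to $((L_{c,0},L_{c,n}))\prod_{i=c}^{c+n-1}(\lambda_{c+n}-\lambda_i)$ is identical (with your $d_n=1$, since the raising fusion coefficients are all $1$ in this normalization), and the paper likewise evaluates $((L_{c,0},L_{c,n}))$ via a tetrahedral coefficient, using the first formula in the proof of Lemma~4.1 of \cite{M1}, where the tetrahedral sum happens to have a single term. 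So that part of your plan is sound, modulo the bookkeeping you acknowledge.

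The genuine gap is the integrality step, i.e.\ the claim that the $Q_n^{\prime(c)}$ form an $\BOplus$-basis of the \emph{lattice} $\BSplus(\mathcal{T}_c)$. Your argument compares with the wrong basis and the comparison itself is false. First, $\{L_{c,m}\}$ is the small graph basis of $V_p(\mathcal{T}_c)$, \emph{not} of $\BSplus(\mathcal{T}_c)$; the lattice basis of \cite{GM} is $\{L_{c,0}v^n\}$ with $v=h^{-1}(z+2)$, and the lattice strictly contains the $\BOplus$-span of the $L_{c,m}$. So even a unitriangular transition to the $L$-basis would identify the wrong lattice. Second, the transition is not unitriangular anyway: since the top fusion coefficient is $1$, the coefficient of $L_{c,n}$ in $Q_n^{\prime(c)}$ is $1/\{n\}!\sim h^{-n}$, which does not lie in $\BOplus$ --- there are no ``denominators introduced by the fusion coefficients'' for the $(\{n\}!)^{-1}$ to absorb. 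The paper's argument is the one you need: write $h^{-1}(z-\lambda_i)=v-h^{-1}(2+\lambda_i)$ and observe that $2+\lambda_i\sim h^2$, so $h^{-1}(2+\lambda_i)\in\BOplus$; hence $h^{-n}Q_n^{(c)}$ equals $L_{c,0}v^n$ plus an $\BOplus$-combination of $L_{c,0}v^j$ with $j<n$, a unitriangular change from the \cite{GM} basis, and then $\{n\}!\sim h^n$ shows the $Q_n^{\prime(c)}$ span the same lattice.
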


We use the notation  $x\sim y$ to mean that $x=uy$
for some unit $u$ in $\BO$. Here $x,y$ can be numbers, or elements of
some module such as $V_p( \mathcal{T}_c)$. For $0 <n < p$, one has
both 
$\{n\}\sim h$ and  $\{n\}_q\sim h$,  and (hence)  $\{n\}^+= \{n\}_q / \{n\}\sim 1$. The following corollary is easily checked.

\begin{cor}\label{self}   $(({Q}_n^{\prime (c)},{Q}_n^{\prime (c)}))\sim h^c$. 
\end{cor}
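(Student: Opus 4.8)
The plan is to deduce the Corollary directly from the norm formula \eqref{Qwithprime} of Theorem~\ref{orthog}, analyzing each factor on the right-hand side up to multiplication by a unit of $\BO$. I would use exactly the three facts recalled just above the Corollary: for $0<m<p$ one has $\{m\}\sim h$ and $\{m\}_q\sim h$, while $\{m\}^+\sim 1$; moreover $q=\zeta_p$ is a unit in $\BOplus\subset\BO$, so every power of $q$ is $\sim 1$. Since $\sim$ is multiplicative, it then suffices to record for each factor the power of $h$ it contributes and to add these contributions.

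The one point genuinely needing care — and the step I would single out as the main (if modest) obstacle — is checking that every bracket symbol occurring in \eqref{Qwithprime} has argument strictly between $0$ and $p$, so that the rules $\{m\}\sim h$, $\{m\}_q\sim h$, $\{m\}^+\sim 1$ actually apply. Using the standing constraints $0\le 2c\le p-3$ (hence $c\le d-1$) and $0\le n\le d-c-1$, I would bound the largest arguments by $2c+2n+1\le 2d-1=p-2$ and $2c+n+1\le c+d\le p-2$, all other arguments being smaller and at least $1$; so every symbol lies in the admissible range.

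It then remains to tally the $h$-contributions. The prefactor $q^{-c(c+1)/2}\sim 1$. The double factorial $\{2c+2n+1\}!!$ is the product of the $c+n+1$ odd-argument symbols $\{1\},\{3\},\dots,\{2c+2n+1\}$, so it is $\sim h^{c+n+1}$. The factor $\{2c+n+1\}^+!$ is a product of terms $\{m\}^+\sim 1$, hence $\sim 1$ and contributes nothing. The denominator $\{n\}!\sim h^{n}$, the numerator $(\{c\}_q!)^2\sim h^{2c}$, and the denominators $\{1\}_q\sim h$ and $\{2c\}_q!\sim h^{2c}$. Adding the signed exponents gives
$$(c+n+1)-n+2c-1-2c=c,$$
so $(({Q}_n^{\prime (c)},{Q}_n^{\prime (c)}))\sim h^{c}$. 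I would finish by noting that this conclusion is a genuine $\sim$-statement over $\BO$ rather than a mere equality of $h$-adic valuations: each elementary relation above is itself an equality up to a global unit of $\BO$, so multiplying them produces the claimed global relation with no additional argument.
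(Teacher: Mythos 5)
Your proof is correct and is precisely the routine verification the paper leaves to the reader (the paper only remarks that the corollary ``is easily checked'' from Eq.~(\ref{Qwithprime}) and the relations $\{m\}\sim h$, $\{m\}_q\sim h$, $\{m\}^+\sim 1$). Your tally of exponents, including the check that all arguments lie strictly between $0$ and $p$, matches the intended argument exactly.
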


\begin{proof}[Proof of Theorem \ref{orthog}] First, we show that the
  ${Q}_n^{\prime (c)}$ are a basis for $\BSplus(\mathcal{T}_c)$. The
  basis of $\BSplus(\mathcal{T}_c)$ constructed in \cite{GM} is
  $\{L_{c,0} v^n \, |  0\leq n\leq
  d-c-1\}$ where $v=h^{-1}(z+2)\in \BSplus(\mathcal T)$. Note  that
  $z-\lambda_i= (z+ 2)- (2+\lambda_i ) $ and $2+\lambda_i \sim h^2$.
Thus $h^{-1}(z- \lambda_i ) = v+ \text{(a scalar in $\BOplus$)}$. Thus
there is a triangular basis change, with ones on the diagonal, between the
basis $L_{c,0} v^n$ given in \cite{GM} and the vectors
  $h^{-n}{Q}_n^{(c)}$, which therefore form a basis. Since $\{n\}
  !\sim h^n$, it follows that the ${Q}_n^{\prime (c)}$ are also a
  basis for $\BSplus(\mathcal{T}_c)$.
  
  \begin{figure}[h]
\includegraphics[width=1.8in]{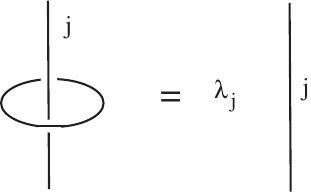}
\caption{Encircling a strand colored $j$ with a loop colored 
$1$ 
 has the same  
effect as multiplying by $\la_j$.} \label{encircle}
\end{figure}

Next, we show that this basis is orthogonal with respect to the Hopf
pairing. It is enough to show that the ${Q}_n^{ (c)}$ are
orthogonal. The idea of proof is the same as showing orthogonality of
the  $Q_n$ in \cite{BHMV1}.
Each $Q_n^{{(c)}}$ is a
  linear combination of $L_{c,m}$ where $0 \le m\le n$, and the coefficient
  of $L_{c,n}$ is one. We have that $((Q_{n+k}^{{(c)}}, Q_{n}^{{(c)}}))$
is zero when $k >0,$ as  $Q_{n+k}^{{(c)}}$  contains a $z-\lambda_{c+m}$ factor for $0\le m\le n+k-1$ which we may use to annihilate the  $L_{c,m}$ terms in $Q_{n}^{{(c)}}.$ 
See Figure \ref{encircle}.
Since the form is symmetric, this shows that the basis is
orthogonal. We remark that since $((\ ,\ ))$ is non-degenerate, this argument also shows that 
$Q_{n}^{{(c)}}=0$ for $n \geq d-c$, which will be used in the proof of
Theorem~\ref{mainth}.   
If $k=0$, we can eliminate all but the $L_{c,n}$ term in the expansion
of
 one copy of 
 $Q_{n}^{{(c)}}$ when computing $((Q_{n}^{{(c)}}, Q_{n}^{{(c)}}))$.
Thus
\[((Q_{n}^{{(c)}}, Q_{n}^{{(c)}}))=((Q_{n}^{{(c)}}, L_{c,n}))= ((L_{c,0}, L_{c,n}))
\prod_{i=c}^{c+n-1} (\lambda_{c+n}-\lambda_{i })~.
\]
But $((L_{c,0}, L_{c,n}))$ can be simplified using the first formula in the proof of \cite[Lemma(4.1)]{M1}. One gets \[((L_{c,0}, L_{c,n}))= C_{c+n, c,c} 
\left\langle 
\begin{matrix}
2c & c+n &c+n \\
n &c &c
\end{matrix}
\right\rangle 
\]
where $C_{c+n, c,c}$ is given in \cite[p. 550]{M1}, and the other
symbol is a tetrahedral symbol in the notation of \cite{MV}. The
formula for a tetrahedral symbol is a sum of products, but in this
case there is only one term in this sum.  Simplification gives 
\begin{equation}\label{Qwithoutprime}
((Q_n^{(c)},Q_n^{(c)}))= q^{ -c(c+1)/ 2} \{n\} !\{2c+2n+1\} !!\{ 2c+n+1\}^+ !
\frac {(\{c\}_q !)^2} {\{1\}_q \{2c\}_q ! } 
\end{equation} This easily implies (\ref{Qwithprime}) and completes
the proof. 
\end{proof}

\section{Orthogonal lollipop basis in higher genus}

In this section, we define the Hopf pairing and construct an
orthogonal basis for $\BSplus(\Si)$ where $\Si$ has arbitrary
genus $g$. The reader interested only in the one-holed torus case may
proceed directly to Section \ref{ptorus}.

{}For the remainder of this section, we assume the reader is familiar with the concepts
and notations of \cite[p.\,820-822]{GM} where a basis of $\BS(\Si)$ is defined in terms of certain colorings
of a lollipop tree. The same
construction actually yields a basis of $\BSplus(\Si)$, see
\cite[p.\,837]{GM}. 
 
Suppose $\Si$ is  equipped with a set $X$ of colored banded points. Fix a handlebody $H$ with boundary $\Si$, and a banded lollipop
tree $G$ in $H$ with boundary vertices equal to $X$, so that $H$ retracts onto $G$. The
{\em small graph basis} $\BG$ of $V_p(\Si)$ consists of the elements $\bg(a,b,c)$
represented by {\em small colorings} of $G$ \cite[Prop.~3.2]{GM}. See Fig.~\ref{figlolli} for an
example representing the basis element
$\bg((a_1,a_2,a_3),(b_1,b_2,b_3),(c_1,c_2,c_3,c_4))$ of $V_p(\Si)$,
where $\Si$ has genus $3$ and $|X|=2$, with colors $c_3$ and $c_4$ on
the two banded points.  The basis $\BB=
\{\bb(a,b,c) \}$
of $\BSplus(\Si)$ given in \cite[p.\,822]{GM} is indexed by the same
set of 
small colorings. It is obtained from $\BG$ by a triangular (but not
unimodular!)  basis
change. We will show how to modify the basis $\BB$ to make it orthogonal
with respect to the Hopf pairing which we discuss next.

\begin{figure}[ht]
\includegraphics[width=3in]{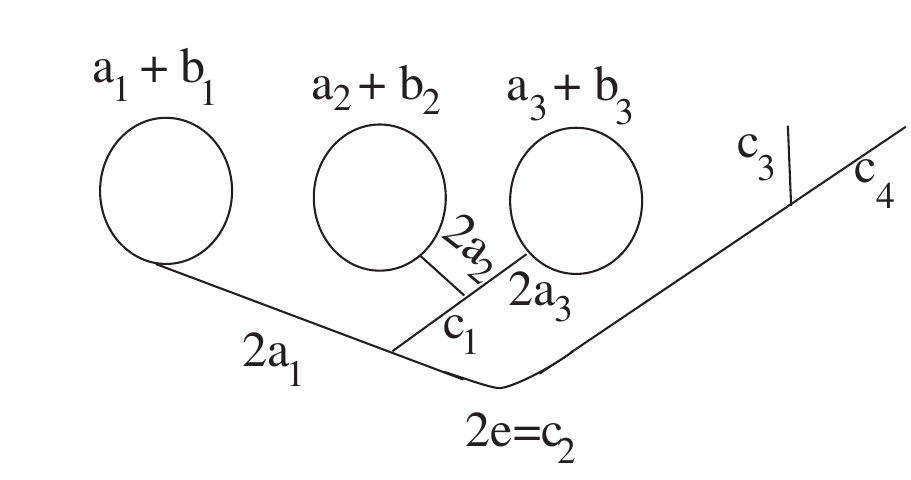}
\caption{Lollipop graph  $G$ in good position, with some coloring.} \label{figlolli}
\end{figure}

In order to define the Hopf pairing in our context, assume that $G$
and $H$ have been isotoped so that $G$ is presented by a half-plane
diagram with no crossings,  with the banded points on the boundary of
the half-plane, and one edge from each loop near the boundary of the
half-plane.  The
graph in 
Fig.~\ref{figlolli} is already drawn in this way. 
One may produce a dual $G'$ by rotating the half plane $180\,^{\circ}$
around its boundary line and then  bringing the loops together and
inserting clasps (linking pairs of loops) as illustrated in
Fig.~\ref{figlolli2}. Gluing $G$ and $G'$ along their boundary
vertices produces a banded trivalent graph which we denote by
$GG'$. Here, as in the genus one case, we insist that all the clasps be as drawn.

\begin{figure}[h]
\includegraphics[width=3in]{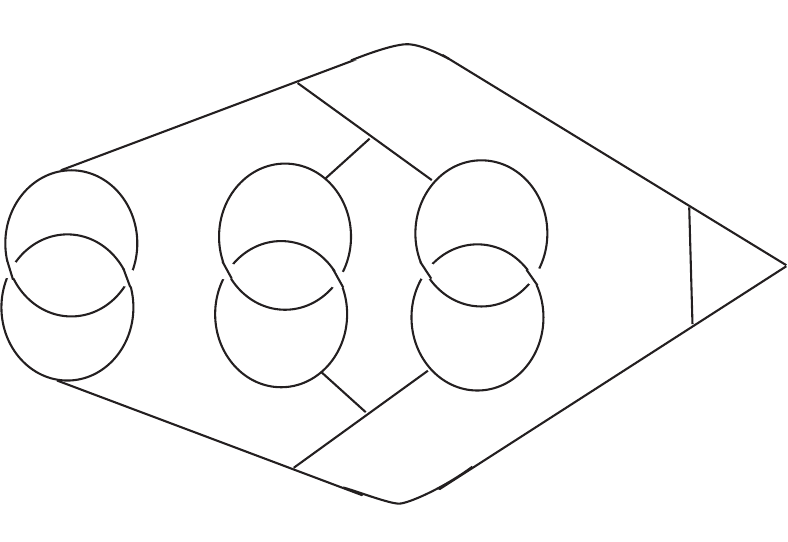}
\caption{$GG'$ for the colored graph in Fig.~\ref{figlolli}, with colors omitted.} \label{figlolli2}
\end{figure} 

As in the genus one case, although the Hopf pairing can be defined in
a basis-independent way, it is convenient to define it by its value on
basis elements, as follows: $((\bg(a,b,c),\bg(a',b',c'))$ is the evaluation of
$GG'$ with colors $(a,b,c)$ on the $G$ part of $GG'$, and colors
$(a',b',c')$ on the $G'$ part, where we use the $180\,^{\circ}$
rotation to transport $(a',b',c')$ from $G$ to $G'$. As before, this
restricts to a symmetric bilinear form 
\[ (( \ ,\, ))  : \BSplus(\Si) \times  \BSplus(\Si) \rightarrow \BOplus~. \]

The basis $\BB$ of $\BSplus(\Si)$ found in \cite[p. 822]{GM} is given
as follows: 
$$\bb(a,b,c) = h^{- \lfloor \frac 1 2 ( -e +\sum_j a_j )
  \rfloor }\bg(a,0,c) \prod_{j=1}^g \frac {(2+z_j)^{b_j}} {h^{b_j}}~.$$ Here $2e$ is
the color of the trunk edge of $G$, see \cite[p.\,821]{GM}, and $z_j$
is a zero-framed circle colored $1$ around the $j$-th hole. The
following modification will be our orthogonal basis:
\begin{equation} 
\tilde\bb(a,b,c) = h^{- \lfloor \frac 1 2 ( -e +\sum_j a_j )
  \rfloor }\bg(a,0,c) \prod_{j=1}^g \frac
{\prod_{i=a_j}^{a_j+b_j-1}(z_j-\lambda_i)}{\{b_j\}!}~.
\end{equation}

\begin{thm}\label{higher} The set $\widetilde \BB=\{ \tilde\bb(a,b,c)\} $ where $(a,b,c)$ runs through
  the small colorings of $G$, is a basis of $\BSplus(\Si)$ which is
  orthogonal with respect to the Hopf pairing. Moreover, 
$$
((\tilde\bb(a,b,c),\tilde\bb(a,b,c)))= h^{-2 \lfloor \frac 1 2 (-e+\sum_i a_i)\rfloor } \langle DG(a,c)\rangle     \prod_{i=1}^{g}
 \frac{ (({Q}_{b_i}^{\prime (a_i)},{Q}_{b_i}^{\prime (a_i)}))\{1\}_q}
 {\{2a_i+1\}_q} ~,
$$
 where $2e$ is
the color of the trunk edge, $\langle DG(a,c)\rangle$ is the evaluation of the colored
 banded graph $DG(a,c)$ defined below, and $(({Q}_{b_i}^{\prime
   (a_i)},{Q}_{b_i}^{\prime (a_i)}))$ is given in Eq.~(\ref{Qwithprime}).
\end{thm}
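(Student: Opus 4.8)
The plan is to reduce the higher-genus orthogonality statement to the genus-one case already established in Theorem \ref{orthog}, exploiting the tensor/product structure built into the basis elements $\tilde\bb(a,b,c)$. The first step is to understand the Hopf pairing on the basis $\BG$ well enough to see that it factors through the loop edges. Because $GG'$ is assembled by gluing $G$ to its mirror dual $G'$ along the boundary vertices, with clasps inserted on the loops, the evaluation $((\bg(a,b,c),\bg(a',b',c')))$ should vanish unless the tree-part colors $(a,c)$ and $(a',c')$ agree, and when they do agree it should decompose as a product of a ``tree contribution'' $\langle DG(a,c)\rangle$ and, for each hole $j$, a local contribution depending only on the loop colors $b_j, b_j'$ and the adjacent trunk color $a_j$. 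I would establish this factorization by a fusion/recoupling argument on the colored banded graph $GG'$: use the skein-theoretic recoupling moves of \cite{MV,KL} to collapse the tree part, leaving $g$ decoupled Hopf-pairing computations that are each formally identical to the genus-one pairing $((L_{a_j,n},L_{a_j,m}))$ with stick color $2a_j$.

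Once this factorization is in hand, the second step is essentially bookkeeping. Since $\tilde\bb(a,b,c)$ is $h^{-\lfloor\frac12(-e+\sum a_j)\rfloor}\,\bg(a,0,c)$ times the product over $j$ of the local loop factors $\prod_{i=a_j}^{a_j+b_j-1}(z_j-\lambda_i)/\{b_j\}!$, and since the $z_j$ act only on their own loop, the pairing of two such basis vectors splits as the scalar $h^{-2\lfloor\frac12(-e+\sum a_i)\rfloor}$ times $\langle DG(a,c)\rangle$ times a product over $j$ of genus-one type pairings. Each such factor is precisely $((Q^{\prime(a_j)}_{b_j},Q^{\prime(a'_j)}_{b'_j}))$ up to a normalization constant coming from the difference between the closed-torus evaluation $\langle L_{a_j,n}L_{a_j,m}\rangle$ used in Section~\ref{sec2} and the local evaluation appearing inside $DG(a,c)$. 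Orthogonality then follows immediately: off-diagonal terms in the tree colors kill the pairing by the factorization, and for matching tree colors the loop factors are orthogonal by Theorem~\ref{orthog} applied hole-by-hole. This simultaneously shows $\widetilde\BB$ is orthogonal and, by the triangularity of the change of basis from $\BB$ (which differs from $\widetilde\BB$ only in the loop polynomials, by the same triangular argument as in the proof of Theorem~\ref{orthog}), that $\widetilde\BB$ is a basis of $\BSplus(\Si)$.

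For the diagonal value, I would assemble the normalization constants tracked above. The local factor must be renormalized from the absolute genus-one pairing $((Q^{\prime(a_i)}_{b_i},Q^{\prime(a_i)}_{b_i}))$ of Eq.~(\ref{Qwithprime}) by the ratio $\{1\}_q/\{2a_i+1\}_q$, which accounts for the quantum-dimension factor carried by the trunk edge of color $2a_i$ where the loop attaches to the tree, as opposed to the closed-$S^3$ normalization used in Theorem~\ref{orthog}. Multiplying the tree contribution $\langle DG(a,c)\rangle$ by these $g$ renormalized local factors and by the global power of $h$ yields exactly the asserted formula.

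The main obstacle is the first step: cleanly extracting the factorization of $((\bg(a,b,c),\bg(a',b',c')))$ into a tree part times decoupled loop parts, and in particular pinning down the precise local normalization $\{1\}_q/\{2a_i+1\}_q$. This requires careful use of the recoupling theory on $GG'$ to show that cutting along the edge where loop $j$ meets the tree separates the evaluation into a product, with the expected quantum-dimension denominator appearing; the clasp conventions (``all clasps as drawn'') must be respected throughout to avoid sign and framing errors. Once the factorization and this local constant are correctly identified, the remainder of the argument is a direct reduction to Theorem~\ref{orthog} applied independently to each hole.
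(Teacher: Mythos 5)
Your proposal is correct and takes essentially the same route as the paper, whose own proof consists of the single sentence ``The proof is the same as in the one-holed torus case'': the reduction you describe --- factoring the evaluation of $GG'$ into the doubled-tree contribution $\langle DG(a,c)\rangle$ times decoupled per-hole genus-one Hopf pairings, each renormalized by $\{1\}_q/\{2a_i+1\}_q$ (the inverse of the quantum dimension of the stick color along which one cuts) --- is precisely what that sentence leaves implicit. In particular you correctly isolate the only genuinely new point in higher genus, namely orthogonality across differing tree colors $(a,c)\neq(a',c')$, and your factorization does handle it, since the relative skein module of a ball forces the two stick colors at each hole to agree and the doubled-tree evaluation vanishes unless the remaining tree colorings agree.
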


The proof is the same as in the one-holed torus case. To define  $DG(a,c)$, assume that $G(a,b,c)$ is a colored lollipop
graph in good position, as pictured in Fig.~\ref{figlolli},
representing the small graph basis element $\bg(a,b,c)$. Then  $DG(a,c)$
is the  colored graph in the plane obtained by removing the loop edges and then adjoining a reflected copy, as in Fig.~\ref{figlolli3}.

\begin{figure}[h]
\includegraphics[width=3in]{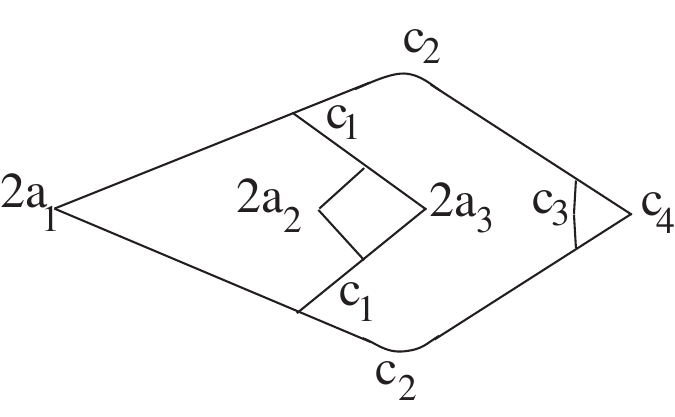}
\caption{$DG(a,c)$ for the colored graph in Fig.~\ref{figlolli}.} \label{figlolli3}
\end{figure}

Using formulas in \cite{MV}, it is easy to see that $\langle
DG(a,c)\rangle$ is always a unit in $\BOplus$. Therefore one has the
following 
\begin{cor}\label{exp} 
$$((\tilde\bb(a,b,c),\tilde\bb(a,b,c)))\sim \left\{
\begin{array}{cl}
h^{e} &\text{if \ $\sum_j a_j \equiv e \pmod{2}$} \\
h^{e+1}&\text{if \ $\sum_j a_j \not\equiv e \pmod{2}$}\\
\end{array}\right.$$
\end{cor}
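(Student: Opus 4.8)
The plan is to read off the $h$-adic valuation of each factor in the formula of Theorem~\ref{higher} and add them up. Write $v_h$ for the $h$-adic valuation on $\BOplus$, and put $S=\sum_{j=1}^g a_j$. Since $\sim$ means equality up to a unit, the assertion $((\tilde\bb(a,b,c),\tilde\bb(a,b,c)))\sim h^k$ is equivalent to $v_h$ of the right-hand side of that formula being equal to $k$, so it suffices to compute that valuation.

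First I would dispose of the factors whose valuation is immediate. The term $\langle DG(a,c)\rangle$ is a unit, as observed just before the statement, so it contributes $0$; and the prefactor $h^{-2\lfloor \frac 12(-e+S)\rfloor}$ contributes exactly $-2\lfloor\frac{S-e}{2}\rfloor$.

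Next I would evaluate the $i$-th factor of the product $\prod_{i=1}^g$. By the Corollary to Theorem~\ref{orthog}, $(({Q}_{b_i}^{\prime (a_i)},{Q}_{b_i}^{\prime (a_i)}))\sim h^{a_i}$, contributing $a_i$. For the remaining quotient $\{1\}_q/\{2a_i+1\}_q$, recall that $\{n\}_q\sim h$ for $0<n<p$; since the colors of a small coloring obey $0\le 2a_i\le p-3$, we have $0<2a_i+1\le p-2<p$, so both $\{1\}_q$ and $\{2a_i+1\}_q$ are $\sim h$ and their quotient is a unit. Hence the $i$-th factor contributes $a_i$, and the full product contributes $\sum_{i=1}^g a_i=S$.

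Adding up, $v_h(((\tilde\bb(a,b,c),\tilde\bb(a,b,c))))=S-2\lfloor\frac{S-e}{2}\rfloor$, and it remains only to evaluate this elementary expression according to the parity of $S-e$. When $S\equiv e\pmod 2$ one has $\lfloor\frac{S-e}{2}\rfloor=\frac{S-e}{2}$, giving valuation $e$; when $S\not\equiv e\pmod 2$ one has $\lfloor\frac{S-e}{2}\rfloor=\frac{S-e-1}{2}$, giving valuation $e+1$. This is exactly the claimed dichotomy. I anticipate no real obstacle: the only step meriting care is checking that $2a_i+1\in(0,p)$ so that $\{2a_i+1\}_q\sim h$, which follows from the admissibility bound on the colors, and everything else is bookkeeping of valuations together with a parity split of the floor function.
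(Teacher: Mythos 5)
Your proposal is correct and follows exactly the route the paper intends: the paper's (terse) justification is precisely that $\langle DG(a,c)\rangle$ is a unit, each factor $(({Q}_{b_i}^{\prime (a_i)},{Q}_{b_i}^{\prime (a_i)}))\{1\}_q/\{2a_i+1\}_q$ contributes $h^{a_i}$ by the corollary to Theorem~\ref{orthog}, and the prefactor plus a parity split of the floor yields $e$ or $e+1$. Your extra check that $0<2a_i+1<p$ is a worthwhile detail the paper leaves implicit, but it is not a different argument.
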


\begin{rem}{\em The dual lattice $\BSs(\Si)$ and its refined
    version $\BSps(\Si)$ are defined in \cite[Def.\,8.1]{GM}
    and \cite[Sec.\,13]{GM}, respectively, using certain hermitian
    forms.  It is not hard to see that
    $\BSps(\Si)$ is also the lattice dual to $\BSplus(\Si)$
    with respect to the Hopf pairing: 
$$\BSps(\Si)= \{x\in 
\BSplus(\Si)\otimes \BQ(\zeta_p)\ 
|\ ((x,y))\in \BOplus=\BZ[\zeta_p]\text{ for
 all } y\in \BSplus(\Si)\}~.$$
}\end{rem}

Using orthogonality of the basis $\widetilde \BB$,  
Cor.~\ref{exp} easily implies that 

\begin{cor} The following rescaling 
\begin{align*}
\tilde\bb^\sharp(a,b,c)&=\left\{
\begin{array}{rl}
h^{-e}\,\tilde\bb(a,b,c) &\text{if \ $\sum_j a_j \equiv e \pmod{2}$} \\
h^{-e-1}\,\tilde\bb(a,b,c) &\text{if \  $\sum_j a_j \not\equiv e \pmod{2}$}\\
\end{array}\right.\\
&= h^{- \lceil \frac 1 2 ( e +\sum_j a_j )
  \rceil }\bg(a,0,c) \prod_{j=1}^g \frac
{\prod_{i=a_j}^{a_j+b_j-1}(z_j-\lambda_i)}{\{b_j\}!}
\end{align*} is a basis $\widetilde \BB^\sharp=\{
\tilde\bb^\sharp(a,b,c)\} $ of $\BSps(\Si)$ which is
  orthogonal with respect to the Hopf pairing.
\end{cor}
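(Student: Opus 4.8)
The plan is to use the orthogonality of $\widetilde\BB$ (Theorem~\ref{higher}) to diagonalize the defining condition for $\BSps(\Si)$ recalled in the Remark above, reducing it to a condition on individual coefficients. First I would tensor with $\BQ(\zeta_p)$ and write an arbitrary $x\in\BSplus(\Si)\otimes\BQ(\zeta_p)$ uniquely as $x=\sum_{(a,b,c)} x_{a,b,c}\,\tilde\bb(a,b,c)$ with $x_{a,b,c}\in\BQ(\zeta_p)$, the sum running over the small colorings of $G$. Since $((\ ,\,))$ is $\BOplus$-bilinear and the $\tilde\bb(a',b',c')$ generate $\BSplus(\Si)$, the vector $x$ lies in $\BSps(\Si)$ if and only if $((x,\tilde\bb(a',b',c')))\in\BOplus$ for every small coloring $(a',b',c')$. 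Orthogonality of $\widetilde\BB$ then collapses this pairing to a single term, $((x,\tilde\bb(a',b',c')))=x_{a',b',c'}\,((\tilde\bb(a',b',c'),\tilde\bb(a',b',c')))$.

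Consequently the membership condition decouples into $x_{a,b,c}\,((\tilde\bb(a,b,c),\tilde\bb(a,b,c)))\in\BOplus$ for each coloring separately, so $\BSps(\Si)$ is the orthogonal direct sum $\bigoplus_{(a,b,c)} I_{a,b,c}\,\tilde\bb(a,b,c)$, where $I_{a,b,c}$ is the fractional ideal $((\tilde\bb(a,b,c),\tilde\bb(a,b,c)))^{-1}\BOplus$. At this point I would invoke Corollary~\ref{exp}: since $((\tilde\bb(a,b,c),\tilde\bb(a,b,c)))\sim h^{e}$ or $h^{e+1}$ according to the parity of $\sum_j a_j$ relative to $e$, the ambiguous unit is irrelevant and $I_{a,b,c}$ is the principal fractional ideal generated by $h^{-e}$ or $h^{-e-1}$ in the two cases. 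Hence $I_{a,b,c}\,\tilde\bb(a,b,c)=\BOplus\,h^{-e}\tilde\bb(a,b,c)$ (resp. $\BOplus\,h^{-e-1}\tilde\bb(a,b,c)$), which is exactly $\BOplus\,\tilde\bb^\sharp(a,b,c)$ by the definition of the rescaling.

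This identifies $\BSps(\Si)=\bigoplus_{(a,b,c)}\BOplus\,\tilde\bb^\sharp(a,b,c)$, so $\widetilde\BB^\sharp$ is a basis, and the second equality in the definition of $\tilde\bb^\sharp(a,b,c)$ follows by absorbing $h^{-e}$ (resp. $h^{-e-1}$) into the exponent, using $-\lfloor\frac12(-e+\sum_j a_j)\rfloor-e$ (resp. $-e-1$) $=-\lceil\frac12(e+\sum_j a_j)\rceil$ in the two parity cases. Orthogonality of $\widetilde\BB^\sharp$ is then immediate, since each $\tilde\bb^\sharp(a,b,c)$ is a scalar multiple of $\tilde\bb(a,b,c)$, so $((\tilde\bb^\sharp(a,b,c),\tilde\bb^\sharp(a',b',c')))$ vanishes whenever $(a,b,c)\neq(a',b',c')$ by Theorem~\ref{higher}. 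The only point requiring care, and the main (minor) obstacle, is that $\BOplus=\BZ[\zeta_p]$ is not a principal ideal domain, so a priori one cannot split off each rank-one summand of the dual lattice as a free $\BOplus$-module; what rescues the argument is precisely that Corollary~\ref{exp} pins every norm down to a unit times a power of the single prime $h$, making each fractional ideal $I_{a,b,c}$ principal. Thus no class-group obstruction arises and the rescaled vectors genuinely form an $\BOplus$-basis.
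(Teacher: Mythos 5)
Your argument is correct and is exactly the one the paper intends: it combines the dual-lattice description of $\BSps(\Si)$ from the preceding Remark with the orthogonality of $\widetilde\BB$ (Theorem~\ref{higher}) and the $h$-valuations of the norms from Corollary~\ref{exp}, which is all the paper offers by way of proof (``Using orthogonality of the basis $\widetilde\BB$, Cor.~\ref{exp} easily implies\dots''). Your additional observation that the fractional ideals involved are principal (a unit times a power of the prime $h$), so no class-group issue arises, and your verification of the floor/ceiling identity for the exponent, are correct fillings-in of details the paper leaves implicit.
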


\begin{rem}\label{appl1}{\em In the above, we deduced the bases $\widetilde \BB$ of
    $\BSplus(\Si)$ and  $\widetilde \BB^\sharp$ of $\BSps(\Si)$ from
    the basis $\BB$ constructed in \cite{GM}. Here is a sketch of a
    more direct proof. First, observe that both $\widetilde \BB$ and $\widetilde
    \BB^\sharp $ are bases of $V_p(\Si)$. Now use the $3$-ball lemma and the lollipop lemma
    of \cite{GM} to show that $\widetilde \BB \subset \BSplus(\Si)$
    and $\widetilde \BB^\sharp \subset \BSps(\Si)$. Then use
    orthogonality and 
    Cor.~\ref{exp} to show that 
$$((\tilde\bb(a,b,c), \tilde\bb^\sharp(a',b',c'))\sim \delta_a^{a'}\delta_b^{b'}\delta_c^{c'}$$
and deduce that $\widetilde \BB$ generates
    $\BSplus(\Si)$ and  $\widetilde \BB^\sharp$ generates
    $\BSps(\Si)$. This proof avoids the index counting argument of
    \cite{GM}.
}\end{rem} 

\begin{rem}\label{appl2} 
{\em
Recall $\BJ_p^+(N)$, the Frohman Kania-Bartoszynska ideal \cite{FKB,GM,G4} of a 
 compact connected oriented
3-manifold $N$ with connected boundary $\Si$.  
Using our orthogonal basis, we can give generators for this ideal as
follows.  The TQFT associates to $N$ a vector $[N]\in \BSplus(\Si)$
which is well defined up to multiplication by a root of unity. Write
$$ [N]=\sum_{(a,b,c)}  x_{(a,b,c)} \tilde\bb(a,b,c)~,$$ 
where the coefficients $x_{(a,b,c)}$ lie in $\BOplus$, and the sum is over small admissible colorings of a lollipop graph
in a handlebody with boundary $\Si.$ 
 Then by 
 Theorem \ref{higher} and Corollary
\ref{exp}, 
together with \cite[Thm.~16.5]{GM},  one has that $\BJ_p^+(N)$ is the
$\BOplus$-ideal generated by 
the numbers
 $h^{\varepsilon( a)} x_{(a,b,c)}$
where 
 $\varepsilon(a)$ 
is zero or one accordingly as  $\sum_j a_j$ is even or odd.}
\end{rem}

\section{Matrices for $t$ and $t^*$}\label{ptorus}

Recall that elements of $V_p({\mathcal
  T}_c)$ are represented skein-theoretically as linear combinations of
colored banded graphs in a solid torus which meet the boundary nicely
in the colored banded point. We refer to such linear combinations
as skein elements. 

\begin{de} We let $t$ be the endomorphism of
$V_p({\mathcal T}_c)$ induced by a full positive twist of the solid
torus. It follows from \cite{GM} that $t$ preserves the lattice $\BSplus({\mathcal T}_c)$. 
\end{de} In the basis (\ref{Lbasis}) of
$V_p({\mathcal T}_c)$, $t$ is diagonal:
\begin{equation} \label{mu} t(L_{c,n})=\mu_{c+n} L_{c,n}~,\end{equation} where $\mu_k=(-1)^k
A^{k(k+2)}$ is the twist eigenvalue \cite{BHMV1}.

Let $\omega_+\in V_p({\mathcal
  T})$ be defined as 
\begin{equation}\label{oplus} 
\omega_+= \sum_{m=0}^{d-1}  \gamma_m \frac {Q_m} { \{m\}!}  \ \text{ where  }
\gamma_m= \frac { (-A)^{(m^2+5m)/ 2}
}{\prod_{k=1}^{m}(A^{2k+1}-1)} \sim 1 .
\end{equation}
(To see that $\gamma_m \sim 1$ one uses that $1-A$ is a unit in
$\BOplus$, see \cite[Lemma 4.1.(i)]{GMW}.)
 It follows from Theorem \ref{orthog} (for $c=0$) that $\omega_+\in \BSplus({\mathcal T})$.

\begin{thm} If $x\in V_p({\mathcal T}_c)$ is represented by a
  skein element (which we also denote by $x$), then $t(x)$ is represented by the
  union of $x$ with a zero-framed $\omega_+$ placed on a meridian of
  the solid torus pushed slightly into the interior. (See Fig.~\ref{omtwist}.)
\end{thm}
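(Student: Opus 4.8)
The plan is to verify the claimed identity of operators on $\Vp({\mathcal T}_c)$ by testing it on the small graph basis $L_{c,n}$ of (\ref{Lbasis}). Both sides of the asserted equality are $\BOplus$-linear in the skein element $x$, and every skein element is a linear combination of the $L_{c,n}$, so it suffices to treat $x=L_{c,n}$. On the left the answer is already recorded in (\ref{mu}): one has $t(L_{c,n})=\mu_{c+n}L_{c,n}$ with $\mu_k=(-1)^kA^{k(k+2)}$. Hence the theorem reduces to the single assertion that gluing a zero-framed $\omega_+$ onto a meridian and evaluating on $L_{c,n}$ returns the scalar $\mu_{c+n}$.

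Next I would pin down the geometry. Pushing the meridian slightly into the interior, it bounds a meridian disc of the solid torus. The loop edge of $L_{c,n}$, carrying color $c+n$, runs longitudinally and meets this disc transversally in exactly one point, while the stick edge (color $2c$), running out to the banded point on the boundary, can be isotoped to miss the disc. Thus the inserted circle encircles only the color-$(c+n)$ loop edge and is unlinked from the rest of the graph. Encircling a single colored edge by a fixed skein placed on such a meridian is a diagonal operation: it multiplies $L_{c,n}$ by a scalar depending only on the loop color $c+n$. (This meridian encircling should not be confused with the longitudinal multiplication by $z$ recalled in Section~\ref{sec2}, which is merely tridiagonal in the $L_{c,n}$.)

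It remains to compute that eigenvalue, which is the heart of the matter. Encircling a strand colored $a$ by $z$ multiplies it by $\lambda_a=-q^{a+1}-q^{-a-1}$, the eigenvalue of the color-$1$ meridian as in \cite{BHMV1}; consequently encircling by $Q_m=\prod_{i=0}^{m-1}(z-\lambda_i)$ multiplies a color-$a$ strand by $\prod_{i=0}^{m-1}(\lambda_a-\lambda_i)$, a product that vanishes once $m>a$. Feeding this into the definition (\ref{oplus}) of $\omega_+$, the eigenvalue of ``encircle by $\omega_+$'' on a color-$a$ strand is the finite sum $\sum_{m=0}^{a}\frac{\gamma_m}{\{m\}!}\prod_{i=0}^{m-1}(\lambda_a-\lambda_i)$. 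The fact I expect to invoke here — and the step I regard as the main obstacle if one insists on a self-contained derivation — is the identity that this sum equals the twist coefficient $\mu_a$; this is exactly the defining property of the signed Kirby element $\omega_+$ from \cite{BHMV1}, whose coefficients $\gamma_m$ are engineered so that encircling with $\omega_+$ realizes a full positive twist. Granting this, the eigenvalue on $L_{c,n}$ is $\mu_{c+n}$, matching (\ref{mu}), and the proof is complete. Finally, the zero-framing of the inserted $\omega_+$ is what prevents any additional framing anomaly from correcting $\mu_{c+n}$, so that the two operators agree exactly rather than only up to a root of unity.
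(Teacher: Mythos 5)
Your reduction to the basis $L_{c,n}$ of (\ref{Lbasis}) is sound: both operators are linear, the stick edge can be isotoped off the meridian disc, and encircling the loop edge (colored $a=c+n$) by a polynomial in $z$ acts diagonally with eigenvalue that polynomial evaluated at $\lambda_{a}$. So the theorem correctly reduces to the identity
$$\sum_{m=0}^{a}\frac{\gamma_m}{\{m\}!}\prod_{i=0}^{m-1}(\lambda_a-\lambda_i)\;=\;\mu_a=(-1)^aA^{a(a+2)},\qquad 0\leq a\leq d-1.$$
This is a genuinely different route from the paper's, which never tests on eigenvectors: it takes the existence of \emph{some} positive-twist element as known from \cite{BHMV2}, quotes from \cite[4.3(2)]{BHMV1} the explicit expansion in the $Q_m$ of the element $t(\omega_{\mathrm{BHMV1}})$ whose encircling produces a \emph{negative} twist, and obtains (\ref{oplus}) by taking mirror images and conjugating scalars via $\overline{A}=A^{-1}$ (an operation fixing each $Q_m$).

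The gap sits exactly at the step you flag as the main obstacle. You discharge the displayed identity by calling it ``the defining property of the signed Kirby element $\omega_+$ from \cite{BHMV1}, whose coefficients $\gamma_m$ are engineered'' to realize a positive twist. But in this paper $\omega_+$ is \emph{defined} by the explicit formula (\ref{oplus}); the theorem is precisely the assertion that those particular coefficients implement a positive twist, so the justification as written is circular. Nor does the citation literally supply the identity: the element with an explicit $Q_m$-expansion in \cite{BHMV1} produces a negative twist, and its coefficients differ from the $\gamma_m$ of (\ref{oplus}) (they are complex conjugates). To close the argument you must either verify the displayed identity directly for the given $\gamma_m$, or perform the mirror-image/conjugation step relating (\ref{oplus}) to the negative-twist element of \cite{BHMV1} --- which is the actual content of the paper's proof.
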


\begin{figure}[h]
\includegraphics[width=1.8in]{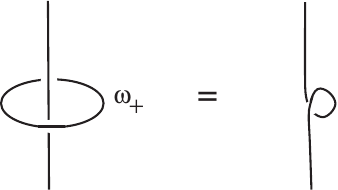}
\caption{Encircling a strand with $\omega_+$ has the same
effect in TQFT as giving that strand a positive twist.} \label{omtwist}
\end{figure}

\begin{proof} The existence of an $\omega_+$ with this property is 
  well-known in TQFT \cite{BHMV2}. The expression (\ref{oplus}) for it
  can be deduced from \cite{BHMV1}, as follows. Let
  $\omega_{\text{BHMV1}}$ be the skein element of \cite{BHMV1}\footnote{Warning: the $\omega$ of \cite{BHMV1} is not the same as the one of
\cite{BHMV2}.}. By \cite[4.3(2)]{BHMV1}, encircling with
$t(\omega_{\text{BHMV1}})$ produces a {\em negative} twist, and 
\[t(\omega_{\text{BHMV1}})=  \sum_{m=0}^{d-1} \frac {(-1)^m A^{-2m}}{\prod_{j=2}^{2m+1}(A^j-1)} \mu_m Q_m=
\sum_{m=0}^{d-1} \frac { A^{m^2} }{\prod_{j=2}^{2m+1}(A^j-1)}  Q_m  ~.\]
A skein expression for $\omega_+$ can now be obtained from this
formula for
$t(\omega_{\text{BHMV1}})$ 
by
taking mirror
images of the skein elements and conjugating the scalars (using
$\overline{A}=A^{-1}$). (Note that this operation leaves $Q_m$
invariant.) Formula (\ref{oplus}) follows.
\end{proof}

\begin{de} We let $t^*$ be the endomorphism of
$V_p({\mathcal T}_c)$ given by $$t^*(x) = x\, \omega_+ ~.$$ It follows
from \cite{GM} that $t^*$ preserves $\BSplus({\mathcal T}_c)$ (since
$\omega_+\in \BSplus({\mathcal T})$).
\end{de} Here the multiplication on the right hand side is the module
structure discussed in Section~\ref{sec2}. Observe
that $t^*(x)$ is represented by the
  union of $x$ with a zero-framed $\omega_+$ placed on the standard longitude  of
  the solid torus pushed slightly into the interior. (See the last
  figure in Section~\ref{sec2}.)

Recall that the mapping class group $\Gamma_{1,1}$ is generated by the
two Dehn twists about the meridian and the longitude of the torus. It
follows from standard results in the skein-theoretical construction of
the TQFT-representation $\rho_p$  \cite{BHMV2} that $t$ and $t^*$ are
$\rho_p$ of certain lifts of these twists to the extended mapping
class group. Thus we have the following 

\begin{thm} The endomorphisms $t$ and $t^*$ describe the TQFT
  representation $\rho_p$ of the extended mapping class group on
  $\BSplus({\mathcal T}_c)$.
\end{thm}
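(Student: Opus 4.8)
The plan is to reduce the statement to the standard skein-theoretic description of how the representation $\rho_p$ represents Dehn twists, together with the already-established skein formulas for $t$ and $t^*$ and the presentation (\ref{presGamma}) of $\Gamma_{1,1}$. Recall that $V_p({\mathcal T}_c)$ is built from the solid torus $H$ with $\partial H$ the torus, and that a mapping class $\varphi$ of $\Sigma=\partial H$ acts through its mapping cylinder. For a Dehn twist $t_\gamma$ about a simple closed curve $\gamma\subset\Sigma$ (avoiding the colored banded point), this mapping cylinder is a surgery cobordism, and the standard surgery formula of \cite{BHMV2} shows that $\rho_p(\tilde t_\gamma)$ acts on a skein element $x\subset H$ by adjoining a zero-framed $\omega_+$ along a copy of $\gamma$ pushed slightly into the interior of $H$. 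The scalar framing anomaly is absorbed precisely by passing to the extended mapping class group and choosing a suitable lift $\tilde t_\gamma$; this is what forces the appearance of the central extension.

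The next step is to apply this formula to the two generators $t_\alpha$ and $t_\beta$. The meridian $\alpha$ bounds a disk in $H$, so pushing $\alpha$ into the interior and encircling with $\omega_+$ is exactly the operation described in the Theorem on encircling a meridian above, which is the full positive twist of the solid torus, namely $t$. Hence $t=\rho_p(\tilde t_\alpha)$ for an appropriate lift $\tilde t_\alpha$. Dually, the longitude $\beta$ pushes into $H$ as the standard longitude, and encircling it with $\omega_+$ is by definition the right module multiplication $t^*(x)=x\,\omega_+$, as recorded after the definition of $t^*$. Hence $t^*=\rho_p(\tilde t_\beta)$ for an appropriate lift $\tilde t_\beta$.

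Since $\Gamma_{1,1}=\langle t_\alpha,t_\beta\rangle$ by (\ref{presGamma}), the extended mapping class group is generated by the lifts $\tilde t_\alpha,\tilde t_\beta$ together with the central framing element, which acts on $\BSplus({\mathcal T}_c)$ by a root of unity. Therefore the representation $\rho_p$ is completely determined by the two matrices $t$ and $t^*$, which is the assertion of the theorem. As a consistency check, the braid relation $t\,t^*\,t=t^*\,t\,t^*$ holds automatically, since both endomorphisms are $\rho_p$ of genuine mapping classes obeying the relation in (\ref{presGamma}), and likewise (\ref{tdelta}) identifies $(tt^*)^6$ with $\rho_p$ of a lift of the boundary twist $t_\delta$.

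The main obstacle is the careful matching of conventions across the surgery formula: one must check that the "full positive twist of the solid torus" corresponds to the Dehn twist about $\alpha$ with the same orientation and framing conventions under which encircling by $\omega_+$ produces a positive twist, and that the dual operation $t^*$ matches the longitude twist $t_\beta$ with a compatible lift. Tracking these signs and the precise framing anomaly against the central extension is the delicate part; once the two identifications $t=\rho_p(\tilde t_\alpha)$ and $t^*=\rho_p(\tilde t_\beta)$ are pinned down consistently, the conclusion is immediate from the fact that $t_\alpha$ and $t_\beta$ generate $\Gamma_{1,1}$.
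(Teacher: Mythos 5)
Your proposal is correct and follows essentially the same route as the paper: the paper likewise observes that $\Gamma_{1,1}$ is generated by the meridinal and longitudinal Dehn twists and invokes the standard skein-theoretic fact from \cite{BHMV2} that encircling with $\omega_+$ realizes $\rho_p$ of suitable lifts of these twists, so that $t$ and $t^*$ determine the whole representation. Your write-up merely unpacks the surgery-formula justification and the framing-anomaly bookkeeping that the paper leaves implicit in its citation.
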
 

\begin{rem}\label{rem45} {\em The lifts chosen here are the
`geometric' lifts as discussed in \cite{MRext}. Choosing different lifts would multiply $t$ and
$t^*$ by well-known scalar factors. 
Adapting the techniques of \cite{MRext}, one
can show that $t t^* t=t^* t t^*$ and
\begin{equation}\label{relcomp}(tt^*t)^4=q^{-6+2c(c+1)-p(p+1)/2}~.
\end{equation} 
We will discuss  
our 
method to do computations like these in a more general
context in \cite{GMnew}.} 
\end{rem}

 We now set out to compute matrix coefficients for $t$ and $t^*$. The following
proposition tells us that it will be good to use the basis constructed
in Section~\ref{sec2} which is
orthogonal for the Hopf pairing.
\begin{prop} \label{adjoint} The endomorphisms $t$ and $t^*$ are
  adjoint for the Hopf pairing:  
\[ (( t(x) ,y )) = (( x , {t}^\star( y) ))~. \]
\end{prop}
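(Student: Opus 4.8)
The plan is to prove the adjointness relation $((t(x),y)) = ((x, t^*(y)))$ by reducing it to a statement about skein evaluations in $S^3$, exploiting the explicit skein-theoretic descriptions of $t$ and $t^*$ given just above. Recall that $t(x)$ is represented by encircling the skein element $x$ with a zero-framed $\omega_+$ on a meridian of the solid torus, while $t^*(y) = y\,\omega_+$ places a zero-framed $\omega_+$ on the longitude. The Hopf pairing $((u,w))$ is, by definition, the evaluation $\langle\,\cdot\,\rangle$ in $S^3$ of the banded graph obtained by gluing the solid torus containing $u$ to a dual solid torus containing $w$ with a clasp inserted.

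First I would write both sides of the claimed identity as evaluations of a single closed banded graph in $S^3$. For the left side $((t(x),y))$, the gluing produces the graph for $((x,y))$ together with an extra $\omega_+$-circle lying on the meridian of the $x$-side solid torus. For the right side $((x,t^*(y)))$, one gets the graph for $((x,y))$ together with an extra $\omega_+$-circle lying on the longitude of the $y$-side solid torus. The heart of the proof is a geometric observation: under the gluing that defines the Hopf pairing (solid torus to dual solid torus across the clasp region), the meridian of the $x$-side torus and the longitude of the $y$-side torus are isotopic in the glued-up $S^3$ as framed circles, each bounding a disk meeting the "core strands" of the pairing graph in the same way. Hence the two closed banded graphs in $S^3$ are isotopic, so their evaluations agree. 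Since the Hopf pairing is defined on the whole module $V_p(\mathcal{T}_c)$ and both $t,t^*$ are linear, it suffices to verify this on skein representatives, and linearity extends it to all of $V_p(\mathcal{T}_c)$.

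The main obstacle I expect is the careful bookkeeping of the clasp and the framings when identifying the meridian on one side with the longitude on the other. The clasp in the definition of $(( \ , \ ))$ breaks the naive symmetry between the two solid tori, and the dual torus $G'$ is obtained by a $180^\circ$ rotation, so one must check that the $\omega_+$-circle placed on the $x$-side meridian, when transported through the gluing, genuinely coincides (up to framed isotopy in $S^3$) with the $\omega_+$-circle on the $y$-side longitude rather than picking up an extra twist or linking with the clasp. A clean way to handle this is to use the defining property of $\omega_+$, namely that encircling any strand with $\omega_+$ has the same effect in TQFT as a positive twist (Figure \ref{omtwist}); thus on the left one is inserting a positive twist into the meridional strand of the pairing graph, and I would show that this twist can be slid across the clasp to become exactly the positive twist on the longitudinal strand that $t^*$ inserts on the right. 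This reduces the whole claim to the invariance of $\langle\,\cdot\,\rangle$ under isotopy in $S^3$, which is automatic.

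Alternatively, and perhaps more cleanly, one may avoid the skein picture entirely: since $t$ is diagonal in the basis $L_{c,n}$ with eigenvalue $\mu_{c+n}$ (Eq.~(\ref{mu})), and the twist acts the same way whether applied before or after the pairing, I would simply compute $((t(L_{c,n}), L_{c,m}))$ and $((L_{c,n}, t^*(L_{c,m})))$ directly on basis elements and observe that both equal the evaluation of the pairing graph $L_{c,n}L_{c,m}$ with a single positive twist inserted, the twist being movable from the $n$-loop to the $m$-loop across the clasped region by isotopy. This basis computation makes the framing bookkeeping fully explicit and is the fallback if the general isotopy argument proves delicate.
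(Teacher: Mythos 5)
Your proposal is correct and is essentially the paper's own argument: both sides are evaluations in $S^3$ of $x\cup y$ together with one extra $\omega_+$-circle, and the identity follows because in the standard genus-one Heegaard decomposition of $S^3$ the meridian of one solid torus is the longitude of the other. The paper states this in one line without the additional framing/clasp bookkeeping or the basis-computation fallback you describe, but the core geometric observation is the same.
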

\begin{proof} We need to show that 
$$((  x \cup  \omega_+ \text{(along the meridian)}, y)) = ((x, y \cup
\omega_+ \text{(along the longitude}))~.$$ But this is clear, since in
the standard decomposition of $S^3$ into two solid tori, the meridian of one solid torus is the longitude of the other. 
\end{proof}

It follows that if we denote the matrix coefficients for $t$ and $t^*$ in the basis
$(Q^{\prime (c)}_n)$ by $a^{(c)}_{m,n}$ and $b^{(c)}_{n,m}$,
respectively:
 \[t(Q_n^{\prime (c)})= \sum_{m=0}^{d-1-c} a^{(c)}_{m,n} {Q}_m^{\prime (c)}\]
\[t^{\star}({Q}_m^{\prime (c)})= \sum_{n=0}^{d-1-c} b^{(c)}_{n,m}
{Q}_n^{\prime (c)}~,\] then we have that:
 \begin{align} 
\notag a^{(c)}_{m,n}(({Q}_m^{\prime (c)},{Q}_m^{\prime (c)}))
&= ((t({Q}_n^{\prime (c)}),{Q}_m^{\prime (c)}))\\ 
\notag
 	= (({Q}_n^{\prime (c)},t^\star({Q}_m^{\prime (c)})))
&=b^{(c)}_{n,m}(({Q}_n^{\prime (c)},{Q}_n^{\prime (c)})).
	 \end{align}
	
Hence 
\begin{equation} \label{anbn} a^{(c)}_{m,n} = \frac {(({Q}_n^{\prime (c)},{Q}_n^{\prime
    (c)}))}{(({Q}_m^{\prime (c)},{Q}_m^{\prime (c)}))} b^{(c)}_{n,m}
= R^{(c)}_{n,m} 
b^{(c)}_{n,m},
\end{equation}

where  
\begin{equation} \label{Rmn} R^{(c)}_{n,m}= 
\frac {\{m\}!\{2c+2n+1\} !!\{ 2c+n+1\}^+ !}{{\{n\}!\{2c+2m+1\} !!\{ 2c+m+1\}^+ !}} \sim 1~.
\end{equation}

That $R^{(c)}_{n,m} \sim 1$ follows easily from the remarks before Corollary \ref{self}.
These same remarks and  (\ref{oplus})  justify the $\sim$'s appearing in (\ref{Clmn}) and Theorem \ref{mainth} below.

Thus we are reduced to compute the $b^{(c)}_{n,m}$. To do so, we
need the coefficients defined in the following proposition, which
describes the module structure of $V_p(\mathcal{T}_c)$ over the
algebra $V_p(\mathcal{T})$ in terms of the $Q$-bases.

\begin{prop}\label{4.7}  
\begin{equation} \label{form47}Q_m Q^{(c)}_n= \sum_{l=0}^{\min\{m,n+c\}} C^l_{m,n+c}
Q^{(c)}_{m+n-l}\end{equation}
where 
\begin{equation} \label{Clmn}C^l_{m,n}= (-1)^l \frac { \{m\}!\{n\}!\{m+n+1\}!}{
  \{m-l\}!\{n-l\}!\{m+n+1-l\}!\{l\}!} \sim h^{2l}~. 
\end{equation}
\end{prop}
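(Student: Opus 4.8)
The plan is to reduce (\ref{form47}) to a purely polynomial identity in the single variable $z$ and then verify it by induction on $m$. As recalled in Section~\ref{sec2}, $V_p(\mathcal{T}_c)$ is a cyclic module over the commutative algebra $V_p(\mathcal{T})$, generated by $L_{c,0}$ (since $z\cdot L_{c,n}=L_{c,n+1}+(\cdots)L_{c,n-1}$ expresses every $L_{c,n}$ as a polynomial in $z$ applied to $L_{c,0}$). Writing $P_m(z)=\prod_{i=0}^{m-1}(z-\lambda_i)$ and $P^{(c)}_k(z)=\prod_{i=c}^{c+k-1}(z-\lambda_i)$, we have $Q_m=P_m(z)$ and $Q^{(c)}_n=L_{c,0}\,P^{(c)}_n(z)$, so multiplying by $L_{c,0}$ shows that (\ref{form47}) is equivalent to the identity
$$P_m(z)\,P^{(c)}_n(z)=\sum_{l=0}^{\min\{m,n+c\}}C^l_{m,n+c}\,P^{(c)}_{m+n-l}(z)$$
in the polynomial ring $\BOplus[z]$. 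Because the $P^{(c)}_k$ are monic of distinct degrees, they form a basis of $\BOplus[z]$, so this expansion is unique and it suffices to identify its coefficients.

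Next I would prove this polynomial identity by induction on $m$, keeping $\nu:=n+c$ fixed. The case $m=0$ is immediate, since $P_0=1$ and only the $l=0$ term (with $C^0_{0,\nu}=1$) survives. For the inductive step I would use $P_{m+1}(z)=(z-\lambda_m)P_m(z)$ together with the ladder relation
$$(z-\lambda_m)\,P^{(c)}_k(z)=P^{(c)}_{k+1}(z)+(\lambda_{c+k}-\lambda_m)\,P^{(c)}_k(z),$$
coming from $P^{(c)}_{k+1}=(z-\lambda_{c+k})P^{(c)}_k$. Multiplying the inductive hypothesis by $(z-\lambda_m)$ and collecting the coefficient of $P^{(c)}_{m+1+n-l}$ reduces the whole statement to the scalar recursion
$$C^l_{m+1,\nu}=C^l_{m,\nu}+(\lambda_{m+\nu-l+1}-\lambda_m)\,C^{l-1}_{m,\nu},$$
with the convention $C^l_{m,\nu}=0$ for $l<0$ (consistent with $\{j\}!=0$ for $j<0$).

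It then remains to check this recursion directly from the closed formula (\ref{Clmn}). Here I would use that $A^2=q$, whence $\lambda_i=-\{2i+2\}^+$, and the product-to-sum formula $\{a\}\{b\}=\{a+b\}^+-\{a-b\}^+$. The latter gives $\lambda_{m+\nu-l+1}-\lambda_m=-\{2m+\nu-l+3\}\{\nu-l+1\}$. Computing the two ratios
$$\frac{C^l_{m+1,\nu}}{C^l_{m,\nu}}=\frac{\{m+1\}\{m+\nu+2\}}{\{m-l+1\}\{m+\nu-l+2\}},\qquad \frac{C^{l-1}_{m,\nu}}{C^l_{m,\nu}}=-\frac{\{l\}}{\{m-l+1\}\{\nu-l+1\}\{m+\nu-l+2\}}$$
from (\ref{Clmn}) and clearing denominators reduces the recursion to the single identity
$$\{m+1\}\{m+\nu+2\}=\{m-l+1\}\{m+\nu-l+2\}+\{2m+\nu-l+3\}\{l\}.$$
Applying the product-to-sum formula to all three products turns both sides into $\{2m+\nu+3\}^+-\{\nu+1\}^+$ after a telescoping cancellation, which completes the verification. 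Finally, the estimate $C^l_{m,n}\sim h^{2l}$ follows from $\{k\}\sim h$ for $0<k<p$, which applies since $m+n+1\le p-2$ in the relevant range.

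I expect the main obstacle to be the $q$-integer bookkeeping in this last step: correctly converting the difference of twist nodes $\lambda_{m+\nu-l+1}-\lambda_m$ into a product of two brackets and then seeing that the three resulting $\{\cdot\}^+$-terms telescope; everything else is formal. An alternative would be to recognize the $C^l_{m,n}$ as quantum Clebsch--Gordan coefficients and quote a product formula for the $Q_m$ from \cite{BHMV1}, but the self-contained induction above seems cleaner and handles the shift by $c$ uniformly.
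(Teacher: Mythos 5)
Your proof is correct and is essentially the paper's argument: both establish the expansion by induction, multiplying by a linear factor $(z-\lambda_\bullet)$ to produce a two-term recursion for the coefficients $C^l_{m,n}$ and then checking that the closed formula satisfies it (the paper leaves that last verification as ``not hard to check''; your bracket identity $\{m+1\}\{m+\nu+2\}=\{m-l+1\}\{m+\nu-l+2\}+\{2m+\nu-l+3\}\{l\}$ is exactly what is needed). The only immaterial differences are that the paper first treats $c=0$, inducting on $n$ with $m$ fixed, and then obtains the general case by substituting $n+c$ for $n$ and dividing by $Q_c$, whereas you induct on $m$ with $n+c$ fixed and carry the shift by $c$ uniformly.
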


We defer the proof to the end of this section. We are now ready to
state the main result of this section.

\begin{thm}\label{mainth} The matrix for $t^*$ is lower triangular, and the matrix
  for $t$ is upper triangular: 
$$
b^{(c)}_{n,m}=a^{(c)}_{m,n}=0 \text{\
    if\ } m > n~.
$$  Moreover, if $m\leq n$, then  
$$ b^{(c)}_{n,m}= \sum_{l=0}^{m+c} b^{(c)}_{n,m,l}, \ \ a^{(c)}_{m,n}=
\sum_{l=0}^{m+c} a^{(c)}_{m,n,l}~,$$ where
 \begin{align} 
\notag b^{(c)}_{n,m,l} &=  C^{l}_{l+n-m,m+c} \, 
\frac {\gamma_{l+n-m}}{\{l+n-m\}!}
\frac {\{n\}!}
{\{m\}!}\sim h^{l}\\ 
\notag a^{(c)}_{m,n,l} &=  C^{l}_{l+n-m,m+c} \, 
\frac {\gamma_{l+n-m}}{\{l+n-m\}!}
\frac {\{n\}!} {\{m\}!}\,
R^{(c)}_{n,m}\sim h^{l} ~.
\end{align}
\end{thm}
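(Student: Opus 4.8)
The plan is to compute $t^*$ directly on the unprimed basis $Q_m^{(c)}$ and then transfer to the primed basis and to $t$ via the relations already in place. Since $t^*(x)=x\,\omega_+$, I would start from
\[ t^*(Q_m^{(c)})=Q_m^{(c)}\,\omega_+=\sum_{k=0}^{d-1}\frac{\gamma_k}{\{k\}!}\,Q_k\,Q_m^{(c)}, \]
using the expression (\ref{oplus}) for $\omega_+$, and then substitute the product formula (\ref{form47}) of Proposition~\ref{4.7} to get the double sum
\[ t^*(Q_m^{(c)})=\sum_{k=0}^{d-1}\frac{\gamma_k}{\{k\}!}\sum_{l=0}^{\min\{k,m+c\}}C^l_{k,m+c}\,Q_{k+m-l}^{(c)}. \]
Everything then reduces to collecting the terms with a fixed lower index $n=k+m-l$.

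To do so I would set $k=l+n-m$ and read off the coefficient of $Q_n^{(c)}$. The constraint $l\le\min\{k,m+c\}=\min\{l+n-m,\,m+c\}$ forces $n\ge m$ (from $l\le l+n-m$) together with $l\le m+c$; this gives at once the triangularity $b^{(c)}_{n,m}=0$ for $m>n$, hence $a^{(c)}_{m,n}=0$ for $m>n$ by (\ref{anbn}), as $R^{(c)}_{n,m}$ is a unit. Here I would invoke the fact recorded in the proof of Theorem~\ref{orthog} that $Q_n^{(c)}=0$ for $n\ge d-c$, which lets me discard those terms $Q_{k+m-l}^{(c)}$ that fall outside the basis range, so the remaining expression is a genuine expansion in $\{Q_0^{(c)},\dots,Q_{d-c-1}^{(c)}\}$. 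Clearing the factors $\{n\}!^{-1},\{m\}!^{-1}$ in $Q_n^{\prime(c)}=\{n\}!^{-1}Q_n^{(c)}$ turns the definition of $b^{(c)}_{n,m}$ into $t^*(Q_m^{(c)})=\sum_n b^{(c)}_{n,m}(\{m\}!/\{n\}!)\,Q_n^{(c)}$, so matching coefficients yields
\[ b^{(c)}_{n,m}=\frac{\{n\}!}{\{m\}!}\sum_{l=0}^{m+c}C^l_{l+n-m,\,m+c}\,\frac{\gamma_{l+n-m}}{\{l+n-m\}!}, \]
which is exactly $\sum_l b^{(c)}_{n,m,l}$. The formula for $a^{(c)}_{m,n}$ then follows by multiplying through by $R^{(c)}_{n,m}$ via (\ref{anbn}).

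It remains to check the estimates $b^{(c)}_{n,m,l}\sim h^l$ and $a^{(c)}_{m,n,l}\sim h^l$, which is pure bookkeeping: from (\ref{Clmn}) one has $C^l_{l+n-m,m+c}\sim h^{2l}$, from (\ref{oplus}) one has $\gamma_{l+n-m}\sim1$, and since $\{j\}\sim h$ for $0<j<p$ the factorial factors give $\{l+n-m\}!^{-1}\{n\}!/\{m\}!\sim h^{-(l+n-m)+(n-m)}=h^{-l}$; multiplying yields $h^{2l}h^{-l}=h^l$, and the extra unit $R^{(c)}_{n,m}\sim1$ does not affect the estimate. I expect the only genuine subtlety to be the careful handling of the summation range together with the truncation $Q_n^{(c)}=0$ for $n\ge d-c$; all the nontrivial skein-theoretic content sits in Proposition~\ref{4.7}, whose proof is deferred, so that Theorem~\ref{mainth} itself is essentially an exercise in substitution and collecting terms.
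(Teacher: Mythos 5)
Your proposal is correct and follows essentially the same route as the paper's own proof: expand $\omega_+$ via (\ref{oplus}), apply Proposition~\ref{4.7}, reindex with $n=k+m-l$, truncate using $Q_n^{(c)}=0$ for $n\ge d-c$, and transfer to $t$ via (\ref{anbn}); the only (cosmetic) difference is that you work with the unprimed $Q_m^{(c)}$ and rescale at the end, whereas the paper computes directly on $Q_m^{\prime(c)}$. The triangularity argument and the $h$-adic bookkeeping are exactly as in the paper.
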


\begin{proof} We compute  
\begin{align}
\notag &t^{\star}({Q}_m^{\prime (c)})= \omega_+\, {Q}_m^{\prime (c)}= 
\sum_{k=0}^{d-1} \gamma_k \frac {Q_k}{\{k\}!} \frac {Q_m^{(c)}}{\{m\}!} \\
\notag &=
 \sum_{k=0}^{d-1}  \frac {\gamma_k }{\{k\}! \{m\}!}
 \sum_{l=0}^{\min\{k,m+c\}} C^l_{k,m+c}\, Q^{(c)}_{k+m-l}  \\
\notag &=\sum_{l=0}^{m+c}\,\,\sum_{k=l}^{d-1}  \frac {\gamma_k
}{\{k\}! \{m\}!} \, C^l_{k,m+c} \,Q^{(c)}_{k+m-l}  
\text{ (changing the order of summation)}\\
\notag &=\sum_{l=0}^{m+c}\,\,\sum_{n=m}^{d-1+m-l}  \frac {\gamma_{l+n-m}
}{\{l+n-m\}! \{m\}!} \, C^l_{l+n-m,m+c} \,Q^{(c)}_{n} \  \text{ (letting
  $n=k+m-l$)}\\
\notag &=\sum_{l=0}^{m+c}\,\,\sum_{n=m}^{d-1-c}  \frac {\gamma_{l+n-m}\{n\}!
}{\{l+n-m\}! \{m\}!} \, C^l_{l+n-m,m+c} \,Q^{\prime (c)}_{n} \  \text{ (as
  ${Q}^{(c)}_{n}=0$ for $n\ge d-c$)}\\
\notag &=\sum_{n=m}^{d-1-c}\,\, \sum_{l=0}^{m+c} b_{n,m,l}^{(c)}\,
Q^{\prime (c)}_{n} 
 \end{align}
where $b_{n,m,l}^{(c)}$ is as in the statement of the theorem. This
proves the result for  the
matrix coefficients $b_{n,m}^{(c)}$ of $t^\star$. To get the matrix
coefficients for $t$, it now suffices to apply Eq.~(\ref{anbn}).
 \end{proof}

It remains to give the 
\begin{proof}[Proof of Prop.~\ref{4.7}.] Let us first consider
the case $c=0$. It is easy to see that 
\begin{equation}\label{formulaQ} Q_m Q_n= \sum_{l=0}^{\min\{m,n\}}
C^l_{m,n} Q_{m+n-l}
\end{equation} for some coefficients $C^l_{m,n}$ in the range $0 \le l \le \min\{m,n\}.$ 
 We must show that
$C^l_{m,n}$ satisfies Eq.~(\ref{Clmn}). 
Observe that if (\ref{Clmn}) holds for
$n \le m$, then it holds in general, by symmetry. So we may assume $n
\le m$. We now fix $m$, and prove the formula by induction on $n$, for
all $0 \le l \le n$ at once. If $n=0$,  $C^0_{m,0} =1$, and formula
(\ref{Clmn}) agrees. Assume inductively the formula is proven for $n$,
and consider it for $n+1$. Again, we may assume that $n+1 \leq m$. 

To perform the inductive step, let  $\beta_{m,n}= \lambda_m-\lambda_n=
\{n-m\}\{m+n+2\}$.   Since $Q_{n+1}= (z-\lambda_n) Q_{n}$, after
multiplying (\ref{formulaQ}) by $z-\lambda_n$ we get the following
recursive relation: $$C^l_{m,n+1} =\left\{
\begin{array}{cl} 
C^0_{m,n} &\text{if \ $l=0$} \\
C^l_{m,n}+ \beta_{m+n-l+1,n} C^{l-1}_{m,n}  &\text{if \  $ 1\leq l\leq
  n$}\\
\beta_{m,n} C^{n}_{m,n}  &\text{if \  $ l=n+1$}
\end{array}\right.$$

It is not hard to check that formula (\ref{Clmn}) satisfies these
recursion relations. Thus, the $c=0$ case of Prop.~\ref{4.7}
is proved. 

For the general case, let $$Q_{n,c}=
\prod_{i=c}^{c+n-1}(z-\lambda_i)= \frac{Q_{n+c}} {Q_c} $$ so that
$Q^{(c)}_{n}= Q_{n,c} L_{c,0}.$ We have
\[Q_m Q_{n,c}= \sum_{l=0}^{\min\{m,n+c\}} C^l_{m,n+c}
Q_{m+n-l,c}~,\] as follows from applying (\ref{formulaQ}) with $n+c$ in
place of $n$, and dividing by $Q_c$. Multiplying both sides of this
equation by $L_{c,0}$, we get (\ref{form47}), completing the proof.  
\end{proof}

We remark that Prop.~\ref{4.7} and its proof also work in the context
of the Kauffman Bracket skein module of a solid torus (relative to a
$2c$-colored banded point on its boundary if $c>0$) over the Laurent
polynomial ring 
 $\BZ[A^{\pm1}]$
 where $A$ is generic ({\em i.e.} not necessarily a root
of unity). 

\section{Irreduciblity of the representation over $\BF_p$}\label{sec5}

Recall that 
$\BZ[\zeta_p]/(h)$ is the finite field
$\BF_p=\BZ/p\BZ$. The 
TQFT-representation
$\rho_p$ on $\BSplus({\mathcal T}_c)$ induces a
representation $\rho_{p,0}$ on $${\mathcal{S}}^+_{p,0}({\mathcal T}_c)= \BSplus({\mathcal T}_c)\slash h
\BSplus({\mathcal T}_c)~,$$ which is an $\BF_p$-vector space of
dimension $d-c$. (Recall $d=(p-1)/2$.) This representation is generated by the two
matrices $t$ and $t^*$ considered modulo $h$. Recall from
Remark~\ref{rem45} that $t t^* t=t^* t t^*$ and $(tt^*t)^4\equiv 1\pmod{h}$. Thus $\rho_{p,0}$ factors through a representation of
\begin{equation}\label{presSL}SL(2,\BZ) =<T, T^*\, | \, T T^* T=T^* T T^*\, ,\, (TT^*T)^4=1>~,\end{equation} 
  \[ \text{where} \quad T= \begin{bmatrix} 1 & 1\\ 0 & 1\end{bmatrix} \quad \text{and} \ T^*=
  \begin{bmatrix} 1 & 0\\ -1& 1 \end{bmatrix}. \] 
Note that  $T$ and $T^*$ are respectively the matrices for the action
of a meridinal and a longitudinal Dehn twist on the homology of the
torus $\mathcal{T}$  
 with respect to the meridian-longitude basis.

We now come to the main result of this section. 

\begin{thm}\label{5.1} 
For every  $0\leq c\leq d-1$, 
the representation $\rho_{p,0}$ factors through
  $SL(2,\BF_p)$. Moreover, the induced representation of $SL(2,\BF_p)$
   is
  isomorphic to the representation of $SL(2,\BF_p)$ on the vector
  space $H_{p,D}$ of homogeneous polynomials in two variables over
  $\BF_p$ of total degree $D$, where $D+1= d-c$.
\end{thm}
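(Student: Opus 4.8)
The plan is to build an explicit isomorphism between $\rho_{p,0}$ and the representation on $H_{p,D}$ directly from the matrix formulas of Theorem~\ref{mainth}, reduced modulo $h$. First observe that the relations $tt^*t=t^*tt^*$ and $(tt^*t)^4\equiv 1\pmod h$ (from Remark~\ref{rem45} and Eq.~(\ref{relcomp}), using $q\equiv 1\pmod h$), together with the presentation (\ref{presSL}), show that $\rho_{p,0}$ is a representation of $SL(2,\BZ)$ via $T\mapsto t$, $T^*\mapsto t^*$. Since $H_{p,D}=\operatorname{Sym}^D$ of the standard representation of $SL(2,\BZ)$ on $\BF_p^2$ (matrices acting by reduction mod $p$) manifestly factors through $SL(2,\BF_p)$, it suffices to produce an $SL(2,\BZ)$-equivariant isomorphism $\rho_{p,0}\cong H_{p,D}$; the assertion that $\rho_{p,0}$ factors through $SL(2,\BF_p)$ then comes for free.

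Next I would reduce the relevant quantities modulo $h$. Using $\zeta_p\equiv 1$, hence $A\equiv -1$ and $2(d+1)\equiv 1\pmod p$, one checks that $\{k\}/h\equiv -k$, $\{k\}^+\equiv 2$, and $\gamma_k\equiv(-2)^{-k}\pmod h$ for $0<k<p$. Feeding these into Theorem~\ref{mainth}, and using $b^{(c)}_{n,m,l}\sim h^l$ (so only the $l=0$ term survives mod $h$), shows that $t$ is unipotent upper triangular and $t^*$ unipotent lower triangular modulo $h$, with diagonal entries $\equiv 1$ (as $\gamma_0=C^0_{\cdot}=R^{(c)}_{n,n}=1$) and off-diagonal entries
$$b^{(c)}_{n,m}\equiv(-2)^{m-n}\binom{n}{m},\qquad a^{(c)}_{m,n}\equiv R^{(c)}_{n,m}(-2)^{m-n}\binom{n}{m}\pmod h\quad(m\le n).$$

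Then I would guess and verify a diagonal change of basis. Writing $e_n=X^{D-n}Y^n$ for the monomial basis of $H_{p,D}$ and computing the action of $T=\left[\begin{smallmatrix}1&1\\0&1\end{smallmatrix}\right]$ and $T^*=\left[\begin{smallmatrix}1&0\\-1&1\end{smallmatrix}\right]$ by $P\mapsto P((X,Y)g)$ gives $[T]_{m,n}=\binom{n}{m}$ and $[T^*]_{n,m}=(-1)^{n-m}\binom{D-m}{n-m}$. I claim that $\Phi\big(Q_n^{\prime(c)}\big)=c_n\,e_n$ with $c_n=2^{\,n}\binom{D}{n}\in\BF_p^\times$ is the desired intertwiner. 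That $\Phi$ conjugates $t^*$ into $T^*$ is a short binomial identity, namely $\binom{n}{m}\tfrac{m!(D-m)!}{n!(D-n)!}=\binom{D-m}{n-m}$. The main obstacle is to check that the \emph{same} $\Phi$ also conjugates $t$ into $T$; this reduces to the congruence $R^{(c)}_{n,m}\,\tfrac{n!(D-n)!}{m!(D-m)!}\equiv(-4)^{n-m}\pmod h$. Reducing $R^{(c)}_{n,m}$ from (\ref{Rmn}) modulo $h$ turns the left side into $2^{\,n-m}\prod_{s=m}^{n-1}\tfrac{2s+2c+3}{\,d-c-1-s\,}$, and the crux is the identity $\tfrac{2s+2c+3}{\,d-c-1-s\,}\equiv -2\pmod p$, which follows from $2d+1\equiv 0\pmod p$ (i.e.\ $2(d-c-1-s)\equiv-(2s+2c+3)$). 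This is the one step where the arithmetic of $p$ enters essentially, and it is where I expect the real work to lie.

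Finally, since each $c_n$ is a unit modulo $p$ (as $D=d-c-1<p$ forces $\binom{D}{n}\not\equiv 0$), the map $\Phi$ is an invertible $SL(2,\BZ)$-equivariant isomorphism $\rho_{p,0}\xrightarrow{\ \sim\ }H_{p,D}$. Both claims of the theorem follow at once: $\rho_{p,0}$ factors through $SL(2,\BF_p)$ because $H_{p,D}$ does, and the induced $SL(2,\BF_p)$-representation is $H_{p,D}$ by construction.
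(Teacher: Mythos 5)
Your proof is correct and follows essentially the same route as the paper: use the relations of Remark~\ref{rem45} to factor through $SL(2,\BZ)$, reduce the matrix entries of Theorem~\ref{mainth} modulo $h$, and exhibit a diagonal intertwiner with the monomial basis of $H_{p,D}$. Your scaling $c_n=2^n\binom{D}{n}$ is a constant multiple of (the inverse of) the paper's $(-1)^n\,n!/(2c+2n+1)!!$, so the two verifications merely distribute the arithmetic differently: the congruence $2(d-c-1-s)\equiv-(2s+2c+3)\pmod p$ that you isolate as the crux is exactly the content of the paper's auxiliary lemma $(-2)^{-k}(d-k-1)!\,(2k+1)!!\equiv(d-1)!\pmod p$.
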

Here we consider the left  action of $SL_2( \BF_p)$ on 
polynomials in variables $x$ and $y$ over  $\BF_p$ given by
\[\begin{bmatrix} a & b\\ c & d
\end{bmatrix}  x^m y^n  =  (ax+cy)^m (bx+dy)^n.\]  

This restricts to a representation $r_{p,D}$ of $SL_2( \BF_p)$ on the vector space $H_{p,D}$ of homogeneous polynomials of a given degree $D$. The dimension of this representation is $D+1.$  It is
irreducible if and only if $0\leq D \leq p-1$
\cite[pp. 31-32]{H}. These are the only irreducible representations of
$SL_2(\BF_p)$ over $\BF_p$.

\begin{cor} The representation $\rho_{p,0}$ on the $\BF_p$-vector
  space ${\mathcal{S}}^+_{p,0}({\mathcal T}_c)$ is irreducible for all
  $0\leq c\leq d-1$. 
\end{cor}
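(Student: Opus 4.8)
The plan is to deduce the corollary directly from Theorem~\ref{5.1} together with the classification of irreducible $SL_2(\BF_p)$-representations recalled just before the corollary. By Theorem~\ref{5.1}, $\rho_{p,0}$ is isomorphic to $r_{p,D}$, the representation of $SL_2(\BF_p)$ on the space $H_{p,D}$ of homogeneous polynomials of degree $D=d-c-1$ in two variables. Since $r_{p,D}$ is irreducible precisely when $0\le D\le p-1$, the whole task reduces to checking that $D=d-c-1$ lies in this range for every admissible color $c$, that is, for $0\le c\le d-1$.

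First I would examine the range of $D$. As $c$ runs over $0\le c\le d-1$, the quantity $D=d-c-1$ decreases from $d-1$ (at $c=0$) to $0$ (at $c=d-1$), so $D$ takes exactly the integer values $0\le D\le d-1$. Using $d=(p-1)/2$, the largest of these is $D=d-1=(p-3)/2$, which satisfies $(p-3)/2\le p-1$ for every prime $p\ge 5$. Hence $0\le D\le p-1$ in all cases, so $r_{p,D}$ is irreducible, and therefore so is the isomorphic representation $\rho_{p,0}$.

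There is no real obstacle here: the substance lies entirely in Theorem~\ref{5.1} (the factorization through $SL_2(\BF_p)$ and the identification with $H_{p,D}$) and in the invoked representation theory of $SL_2(\BF_p)$. The only point requiring attention is that the admissible range $0\le c\le d-1$ keeps $D$ at most $d-1$, safely below the threshold $p-1$ past which $r_{p,D}$ would cease to be irreducible; this is immediate from the arithmetic above.
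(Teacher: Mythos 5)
Your proposal is correct and is exactly the argument the paper intends: the corollary is an immediate consequence of Theorem~\ref{5.1} combined with the cited fact that $r_{p,D}$ is irreducible precisely for $0\le D\le p-1$, and your check that $D=d-c-1\le d-1=(p-3)/2\le p-1$ for all admissible $c$ is the only verification needed.
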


\begin{rem}{\em The matrices $t$ and $t^*$ have order $p$, as follows
    from (\ref{mu}). But this is not enough to ensure that $\rho_{p,0}$ factors through
  $SL(2,\BF_p)$, because adding the relation $T^p=(T^*)^p=1$ to the
  presentation (\ref{presSL}) defines still an infinite group if
  $p\geq 7$ (it is a double cover of the $(2,3,p)$ triangle group). 
}\end{rem}

\begin{rem}{\em The reader may wonder whether  ${\mathcal{S}}^+_{p,0}(\Si)$
    is an irreducible representation of the mapping class group for
    general surfaces $\Si$. In fact, if the genus of
    $\Si$ is at least $3$, ${\mathcal{S}}^+_{p,0}(\Si)$ is not
    irreducible, as already observed in \cite[Section
    14]{GM}. 
The
situation can be understood completely
    using the  orthogonal
    lollipop basis: one finds that ${\mathcal{S}}^+_{p,0}(\Si)$ has a
    composition series with at most two irreducible pieces (at least
    when $\Si$ has at most one 
    colored point). Details
    will be given elsewhere  
    \cite{GM3}.

We remark that 
over
the complex numbers,
the representation $\BSplus(\Si)\otimes
\mathbb{C}$
 is always irreducible when $p$ is prime 
and    when $\Si$ has at most one 
    colored point \cite{GM3}. 
  If $\Si$ 
has no colored points, 
this result
can be obtained  by adapting
Robert's proof of the analogous
     result for the $SU(2)$-TQFT (where $A$ is a primitive 
$4p$-th root
     of unity) \cite{R}. 
Irreducibility over the complex numbers implies irreducibility
over the cyclotomic field  $\BQ(\zeta_p)$, but does not, of course,  imply 
    irreducibility over the finite field $\BF_p$. Also, Roberts'
    argument does not apply over $\BF_p$, because it uses
    the fact that the twist eigenvalues
    $\mu_k=(-A)^{k(k+2)}=\zeta_p^{(d+1)k(k+2)}$ for $0\leq k\leq d-1$ are all
distinct.
But these eigenvalues become 
 equal to $1$
in $\BF_p$ (since $\zeta_p=1$ in $\BF_p$).
}\end{rem}

\begin{proof}[Proof of Theorem \ref{5.1}.] Consider the basis 
$$ (x^{D-n} y^n)_{n=0,1,\ldots,D}$$ of $H_{p,D}$. In this basis, $T$
is upper triangular, $T^*$ is lower triangular, and the matrix coefficients of
  $T$ and $T^*$ are given by
 \begin{align*} T  (x^{D-n} y^{n})  &= \sum_{m=0}^n \alpha^{(D)}_{m,n}   x^{D-m} y^{m} \quad \text{where $\alpha^{(D)}_{m,n}= \binom{n}{m}$}~,\\ 
T^{\star}  (x^{D-n} y^{n})  &= \sum_{m=n}^D  \beta^{(D)}_{m,n}
x^{D-m} y^{m} \quad \text{where $\beta^{(D)}_{m,n}= (-1)^{m-n} \binom
  {D-n}{m-n}$}~. \end{align*}

The matrix coefficients of $t$ and $t^*$ acting on
${\mathcal{S}}^+_{p,0}({\mathcal T}_c)$ are given by  $a^{(c)}_{m,n}$
and  $b^{(c)}_{m,n}\pmod{h}$, which we denote by  $\hat a^{(c)}_{m,n}$
and  $\hat b^{(c)}_{m,n}$. Recall that they lie in the finite field $\BF_p$.

\begin{lem}\label{5.5}   \[\hat a^{(c)}_{m,n} = \frac {(-1)^{n-m} (2c+2n+1)!!}
  {(n-m)! (2c+2m+1)!!} 
\]
\[\hat b^{(c)}_{m,n}  =  (-2)^{n-m} \binom{m} {n}~.\]
\end{lem}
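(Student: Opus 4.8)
The plan is to reduce everything to the explicit formulas for the matrix coefficients already established in Theorem~\ref{mainth}, and then take the reduction modulo $h$ carefully. Recall that Theorem~\ref{mainth} gives
\[
a^{(c)}_{m,n}=\sum_{l=0}^{m+c} a^{(c)}_{m,n,l},\qquad a^{(c)}_{m,n,l}\sim h^{l},
\]
with the analogous expansion for $b^{(c)}_{n,m}$. Since each term $a^{(c)}_{m,n,l}$ is $\sim h^{l}$, only the $l=0$ term survives modulo $h$. So my first step is to specialize the formulas of Theorem~\ref{mainth} to $l=0$ and reduce mod $h$.

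Concretely, I would start with $\hat b^{(c)}_{m,n}$. (Note the index swap: the statement writes $\hat b^{(c)}_{m,n}$, which is the reduction of the quantity called $b^{(c)}_{n,m}$ in Section~\ref{ptorus}, so I must keep the conventions straight.) Setting $l=0$ in the formula for $b^{(c)}_{n,m,l}$ gives
\[
b^{(c)}_{n,m,0}=C^{0}_{n-m,m+c}\,\frac{\gamma_{n-m}}{\{n-m\}!}\,\frac{\{n\}!}{\{m\}!}.
\]
Here $C^{0}_{n-m,m+c}=1$ by Eq.~(\ref{Clmn}), and $\gamma_{n-m}\sim 1$. The main work is to compute the mod-$h$ reductions of the quantum-integer ratios. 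The key facts I would use are $\{n\}\sim h$ for $0<n<p$ (stated after Theorem~\ref{orthog}), so that the ratio $\{n\}!/(\{n-m\}!\{m\}!)$ reduces mod $h$ to a genuine integer which I claim is $\pm\binom{n}{m}$ up to a power of $2$; and the precise value of $\gamma_{n-m}\bmod h$. To pin down the constants, I would use the limiting behavior $\{k\}/h\to$ (an explicit unit) as $\zeta_p\to 1$, equivalently $A\to -1$ (since $A=-q^{d+1}$ and $q\to 1$). Because $-A\to 1$, one has $\{k\}=(-A)^k-(-A)^{-k}$ and $\{k\}/\{1\}\to k$ in the reduction, so $\{n\}!/(\{n-m\}!\{m\}!)\equiv\binom{n}{m}\pmod h$. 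The factors of $(-2)^{n-m}$ should emerge from $\gamma_{n-m}$ together with the sign/normalization discrepancies: I expect $\gamma_k\equiv(-2)^{k}/k!$-type behavior, and combining these yields $\hat b^{(c)}_{m,n}=(-2)^{n-m}\binom{m}{n}$ once the indices are matched.

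For $\hat a^{(c)}_{m,n}$ I would use Eq.~(\ref{anbn}), namely $a^{(c)}_{m,n}=R^{(c)}_{n,m}\,b^{(c)}_{n,m}$, reduce the ratio $R^{(c)}_{n,m}$ of Eq.~(\ref{Rmn}) modulo $h$, and combine with the result for $\hat b$. Under the substitution $\{k\}\to k$, the double factorials $\{2c+2n+1\}!!$ and $\{2c+m+1\}^{+}!$ in $R^{(c)}_{n,m}$ reduce to the ordinary integer expressions $(2c+2n+1)!!$ etc. (recalling $\{k\}^{+}\sim 1$ with $\{k\}^{+}\to 2$ in the limit), producing exactly the claimed ratio of double factorials; multiplying by $\hat b$ then gives the stated $\hat a^{(c)}_{m,n}$.

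The main obstacle will be bookkeeping of constants and signs in the limit $\zeta_p\to 1$: I must compute not just that things are $\sim h^{0}$ but their exact residues in $\BF_p$, which requires tracking the precise limiting values of $\{k\}/\{1\}$, of $\{k\}^{+}$, and of $\gamma_k$, and then matching the index conventions $(m,n)$ versus $(n,m)$ between $a$ and $b$. In particular I expect the factor $(-2)^{n-m}$ to be the delicate point, since it arises from combining the $2$'s in $\{k\}^{+}\to 2$ (or equivalently in $\gamma_k$) across several factorials; a clean way to handle this is to prove the identity first for $c=0$ against the known $SL(2,\BZ)$-matrices $T,T^{*}$ via Eq.~(\ref{presSL}), and then check the general $c$ case reduces correctly. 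Once the residues are computed, comparing with the coefficients $\alpha^{(D)}_{m,n}=\binom{n}{m}$ and $\beta^{(D)}_{m,n}=(-1)^{m-n}\binom{D-n}{m-n}$ of $T,T^{*}$ (which is done in the proof of Theorem~\ref{5.1} following this lemma) completes the identification.
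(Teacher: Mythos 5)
Your plan is exactly the paper's proof: reduce Theorem~\ref{mainth} modulo $h$, observe that only the $l=0$ term survives since $a^{(c)}_{m,n,l}\sim h^{l}$ and $b^{(c)}_{n,m,l}\sim h^{l}$, and then evaluate the residues using $A\equiv-1$, $\{k\}/\{1\}\equiv k$, and $\{k\}^{+}\equiv2\pmod h$. The one detail to correct is your guess for $\gamma_k$: from Eq.~(\ref{oplus}) one gets $\gamma_k\equiv(-1)^k2^{-k}=(-2)^{-k}\pmod h$ (no factorial --- the $\{n-m\}!$ sitting under $\gamma_{n-m}$ is instead absorbed into $\{n\}!/(\{n-m\}!\{m\}!)\equiv\binom{n}{m}$), which then yields exactly the stated $(-2)^{n-m}\binom{m}{n}$ after the index swap.
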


\begin{proof} This follows from Theorem~\ref{mainth}, using that $q\equiv 1 \pmod{h}$, $A\equiv -1 \pmod{h} $, $\gamma_m\equiv  (-1)^m 2^{-m} \pmod{h} $, $\{n\}^+\equiv  2 \pmod{h} $, and $\{n\}/\{1\}\equiv n \pmod{h} $. 
\end{proof}

Now assume  $D=d-c-1$ and define a vector
space isomorphism $\Phi:  H_{p,D} \rightarrow
{\mathcal{S}}^+_{p,0}({\mathcal T}_c)$ by 
\[\Phi(x^{D-n} y^n)= (-1)^n \frac  {n!} {(2c+2n+1)!!}
Q_n^{\prime (c)} .\] 

We claim that $\Phi$ intertwines the representations $r_{p,D}$ 
on $H_{p,D}$ and $\rho_{p,0}$ on ${\mathcal{S}}^+_{p,0}({\mathcal
  T}_c)$. To see this, it suffices to check that (in $\BF_p$) one has
\[ (-1)^m \frac {m! } {(2c+2m+1)!!}\alpha^{(d-c-1)}_{m,n}= (-1)^n \frac {n!}{ (2c+2n+1)!!}  \hat a^{(c)}_{m,n}\]
\[ (-1)^m \frac  {m! } { (2c+2m+1)!!} \beta^{(d-c-1)}_{m,n}= (-1)^n
\frac {n!}{ (2c+2n+1)!!}  \hat b^{(c)}_{m,n} ~.\]

The first is immediate. The second reduces to
\[(-2)^{-n} (d-c-1-n)!(2c+2n+1)!!= (-2)^{-m} (d-c-1-m)!(2c+2m+1)!!\]
which  
is true due to the following lemma.

\begin{lem} For $0 \le k \le d-1$,
\[(-2)^{-k} (d-k-1)! (2k+1)!!= (d-1) ! \pmod{p}~.\]
\end{lem}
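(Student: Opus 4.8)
The plan is to prove the identity
\[(-2)^{-k}(d-k-1)!(2k+1)!! \equiv (d-1)! \pmod p\]
for $0 \le k \le d-1$ by induction on $k$. The base case $k=0$ is immediate, since both sides equal $(d-1)!$. For the inductive step, I would compare the expression at $k+1$ with the expression at $k$: the left-hand side gets multiplied by the ratio
\[\frac{(-2)^{-(k+1)}(d-k-2)!(2k+3)!!}{(-2)^{-k}(d-k-1)!(2k+1)!!} = \frac{-(2k+3)}{2(d-k-1)}~.\]
So the whole claim reduces to showing that this ratio is $\equiv 1 \pmod p$, i.e. that $-(2k+3) \equiv 2(d-k-1) \pmod p$. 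The hard part, such as it is, will be nothing more than bookkeeping with $d=(p-1)/2$: one has $2d = p-1 \equiv -1 \pmod p$, hence $2(d-k-1) = 2d - 2k - 2 \equiv -1 - 2k - 2 = -(2k+3) \pmod p$, which is exactly what is needed.

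The only genuine subtlety I anticipate is making sure all the factorial and double-factorial terms appearing are invertible modulo $p$, so that dividing to form the ratio above is legitimate in $\BF_p$. This is guaranteed by the range $0 \le k \le d-1$: since $d = (p-1)/2$, every integer factor appearing in $(d-k-1)!$ and in the odd double factorial $(2k+1)!!$ is at most $2d-1 = p-2 < p$ and positive, hence coprime to $p$. In particular $d-k-1 \ge 0$ and $2k+3 \le 2d+1 = p$; I should note that the single edge case $2k+3 = p$ (which occurs at $k=d-1$) is harmless because at that value the factor $d-k-1 = 0$ makes the ratio a statement about the empty product, and the induction is only used to step \emph{up to} $k+1 \le d-1$, so the potentially-zero denominator $2(d-k-1)$ with $k+1 \le d-1$ never vanishes.

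In summary, the proof is a one-line induction whose inductive step collapses to the congruence $2d \equiv -1 \pmod p$; I expect no real obstacle beyond verifying invertibility of the terms being cancelled. An alternative, if one prefers to avoid induction, would be to rewrite $(2k+1)!! = \frac{(2k+1)!}{2^k k!}$ and recognize the resulting products modulo $p$ via the reflection $j \equiv -(p-j) \pmod p$ together with $d = (p-1)/2$; this turns both sides into the same rearrangement of $(d-1)!$ up to signs, but the telescoping induction above is cleaner and I would present that.
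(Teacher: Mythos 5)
Your proof is correct and is essentially identical to the paper's: both argue by induction on $k$, with the inductive step reducing to the congruence $(-2)(d-k-1)\equiv 2k+3 \pmod p$, which follows from $2d\equiv -1$. Your extra care about invertibility of the denominator for $k+1\le d-1$ is a fine (if implicit in the paper) point, and nothing further is needed.
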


\begin{proof} We prove by induction on $k$. Let $u_k$ be the
  expression on the left hand side.
We have that $u_0=(d-1) !$, and also $u_{k+1}= (2k+3)u_k/(-2)(d-k-1)= u_k \in \BF_p$, as $(-2)(d-k-1)=2k+3\in \BF_p$.
\end{proof}
 
This completes the proof of Theorem \ref{5.1}.
\end{proof}

 \begin{rem} {\em With van Wamelen  
   \cite[p.~264]{GMW},
 we gave a
     formula for the action of $t$ on  $v^n$  in the Kauffman bracket 
skein
module of $S^1 \times D^2$.
This formula gives an expression for the matrix coefficients $\hat
a_{m,n}^{(0)}$ which looks quite different (and more complicated) than
the $c=0$ case of the expression in
Lemma \ref{5.5}. Comparing the two expressions 
leads to a proof of 
the following 
identity:

 For integers $n$, $m$, and $i$ with $n \ge m \ge 1$, and $i \geq 0$,
\[
\sum_{k=m}^{n} (-1)^{k+n}\frac k n \binom{k^2-1}{n-m-i} \binom{2n}{n-k} \binom{k+m-1}{k-m}=
\left\{
\begin{array}{cl}
\displaystyle{ \frac {(2n-1) !}  {(n-m)!(2m-1) !}}&\text{if \ $i=0$}\\
 0 &\text{if \ $i>0$.} \\
\end{array}\right.
\]

Conversely, if one has an independent proof of this identity for
$i=0$, then one can deduce the
formula for $\hat a_{m,n}^{(0)}$ in Lemma \ref{5.5} from the formula for $t(v^n)$ in  \cite{GMW}.

We thank C. Krattenthaler \cite{K} for providing such an independent
proof of the 
identity. Although the sum is not hypergeometric (due to the $k^2$ in
the binomial coefficient), Krattenthaler was able to reduce the proof to 
applications of Dixon's identity \cite[(2.3.3.6); Appendix (III.9)]{S}.
}\end{rem}

\end{document}